\newtheorem{theorem}{Theorem}[section]
\newtheorem{lemma}[theorem]{Lemma}
\newtheorem{proposition}[theorem]{Proposition}
\theoremstyle{definition}
\newtheorem{remark}[theorem]{Remark}
\newcommand{\wh}{\widehat}
\newcommand{\R}{\mathbb{R}}
\newcommand{\N}{\mathbb{N}}
\newcommand{\cC}{{\mathcal C}}
\newcommand{\cN}{{\mathcal N}}
\newcommand{\cR}{{\mathcal R}}   
\newcommand{\cS}{{\mathcal S}}
\newcommand{\mR}{{\mathbf R}}
\newcommand{\eps}{\varepsilon}
\newcommand{\weakto}{\rightharpoonup}
\newcommand{\embed}{\hookrightarrow}
\begin{document}

\title[The nonlinear Helmholtz equation in the high frequency limit]{Multiple standing waves 
	for the nonlinear Helmholtz equation concentrating in the high frequency limit}
\author{Gilles Ev\'equoz}
\address{Goethe-Universit\"at Frankfurt, Institut f\"ur Mathematik, 
		Robert-Mayer-Str.~10,  D-60325 Frankfurt am Main, Germany} 
\email{evequoz@math.uni-frankfurt.de}

\begin{abstract}
This paper studies for large frequency number $k>0$ the existence and multiplicity
of solutions of the semilinear problem
$$
-\Delta u -k^2 u=Q(x)|u|^{p-2}u\quad\text{ in }\R^N, \quad N\geq 2.
$$
The exponent $p$ is subcritical and the coefficient $Q$ is continuous, nonnegative and
satisfies the condition
$$
\limsup_{|x|\to\infty}Q(x)<\sup_{x\in\R^N}Q(x).
$$
In the limit $k\to\infty$, sequences of solutions associated to ground states of a dual equation 
are shown to concentrate, after rescaling, at global maximum points of the function $Q$. 
\end{abstract}

\maketitle

\section{Introduction and main results}
The existence of solutions of semilinear elliptic PDEs on $\R^N$, 
concentrating at single points or on higher-dimensional sets has a long history. 
In their pioneering papers, Floer and Weinstein~\cite{floer.weinstein:86} and
Rabinowitz \cite{rabinowitz91} studied this question
for positive solutions of the nonlinear Schr\"odinger equation 
\begin{equation}\label{eqn:NLS_eps}
-\eps^2\Delta u + V(x) u = Q(x)|u|^{p-2}u \quad\text{ in }\ \R^N,
\end{equation}
in the case where $Q\equiv 1$ and assuming $\inf V>0$. Under the global condition
\begin{equation}\label{eqn:liminf_V}
\liminf_{|x|\to\infty}V(x)>\inf_{x\in\R^N}V(x),
\end{equation}
it was proved in \cite{rabinowitz91} that a ground state 
(i.e., positive least-energy solution) of \eqref{eqn:NLS_eps} exists for small $\eps>0$.
In the limit $\eps\to 0$, Wang \cite{wang93} showed that sequences of ground states concentrate 
at a global minimum point $x_0$ of $V$ and converge, after rescaling, 
towards the ground state of the limit problem
\begin{equation}\label{eqn:NLS_eps_limit}
-\Delta u + V(x_0)u=|u|^{p-2}u \quad\text{ in }\ \R^N.
\end{equation}
Extensions of these results were obtained by many authors and the interested reader
may consult the monograph by Ambrosetti and Malchiodi \cite{ambrosetti-malchiodi-a} 
for a precise list of references. Among the recent papers on this topic, 
let us point out the work of Byeon, Jeanjean and Tanaka \cite{byeon-jeanjean,byeon-jeanjean-tanaka} 
where the right-hand side is replaced by a very broad class of autonomous nonlinearities
and the paper by Bonheure and Van Schaftingen \cite{bonheure-vanschaftingen08} 
in which $V$ is allowed to vanish at infinity and $Q$ may have singularities.

In the present paper, we focus on the nonlinear Helmholtz equation
\begin{equation}\label{eqn:nlh}
-\Delta u -k^2 u =Q(x)|u|^{p-2}u \quad\text{in }\ \R^N,
\end{equation}
where $Q\geq 0$ is a bounded function. Our aim is to investigate the existence of real-valued
solutions for $k>0$ large, as well as their behavior as $k\to\infty$.
Setting $\eps=k^{-1}$ and $w=\eps^{\frac2{p-2}}u$, we find that $w$ solves the problem \eqref{eqn:NLS_eps}
with $V\equiv -1$ and it is therefore natural to ask, whether the concentration results mentioned above
can also be obtained for this equation. But when trying to adapt the previous methods to the present case,
several obstacles appear. First, the structure of the limit problem
\begin{equation}\label{eqn:NLH_eps_limit}
-\Delta u -u =Q(x_0)|u|^{p-2}u\quad\text{in }\ \R^N
\end{equation}
is more complex than \eqref{eqn:NLS_eps_limit}. In particular all 
solutions of \eqref{eqn:NLH_eps_limit} change sign infinitely many times
and no uniqueness result is known. Second, there is no direct variational formulation available 
for the problems \eqref{eqn:nlh} -- \eqref{eqn:NLH_eps_limit} 
and therefore no natural concept of ground state associated to them.
Nevertheless, we will show that variational arguments in the spirit of
\cite{rabinowitz91,wang93} can be used to obtain existence and concentration results 
for solutions of the nonlinear Helmholtz equation \eqref{eqn:nlh}.

Our method relies on the dual variational framework established in the recent paper \cite{evequoz-weth-dual}
which consists in inverting the linear part and the nonlinearity. More precisely,
setting $\eps=\frac1k$ and $Q_\eps(x)=Q(\eps x)$, we look at the integral equation
\begin{equation}\label{eqn:dual} 
|v|^{p'-2}v=Q_\eps^{\frac1p}\mR\left(Q_\eps^{\frac1p}v\right),
\end{equation}  
where $p'=\frac{p}{p-1}$ and where $\mR$ denotes the real part of the 
Helmholtz resolvent operator.
The solutions of this equation are critical points of the so-called dual energy functional 
$J_\eps$: $L^{p'}(\R^N)$ $\to$ $\R$ given by
$$
J_\eps(v)=\frac1{p'}\int_{\R^N}|v|^{p'}\, dx -\frac12\int_{\R^N} Q_\eps^{\frac1p}v\mR\left(Q^{\frac1p}v\right)\, dx
$$
and every critical point $v$ of $J_\eps$ gives rise to a strong solution $u$ of \eqref{eqn:nlh} with $k=\frac1\eps$
by setting
\begin{equation}\label{eqn:u_transf_dual_introd}
u(x)=k^{\frac2{p-2}}\mR\left(Q_\eps^\frac1pv\right)(kx), \quad x\in\R^N.
\end{equation}
This correspondence allows us to define a notion of ground state for \eqref{eqn:nlh} as follows.
If $\eps=\frac1k$ and $v$ is a nontrivial critical point for $J_\eps$ at the mountain pass level,
the function $u$ given by \eqref{eqn:u_transf_dual_introd} will be called 
a \emph{dual ground state} of \eqref{eqn:nlh}.

A motivation behind this definition is given by considering \eqref{eqn:nlh} on a bounded domain 
with Dirichlet boundary condition. For this problem, Szulkin and Weth \cite[Sect.~3]{szulkin-weth09} 
proved that the ground state level for the direct functional is attained by a nontrivial critical point. 
In the case where the linear operator $-\Delta-k^2$ is invertible, one can show that 
it is also a critical point of the dual energy functional at the mountain pass level.

The first main result of this paper concerns the existence and 
concentration, up to rescaling, of sequences of dual ground states.
\begin{theorem}\label{thm1}
Let $N\geq 2$, $\frac{2(N+1)}{N-1}<p<\frac{2N}{N-2}$ (resp. $6<p<\infty$ if $N=2$) 
and consider a bounded continuous function $Q\geq 0$ such that
\begin{equation}\label{eqn:global_cond}
Q_\infty:=\limsup\limits_{|x|\to\infty}Q(x)<Q_0:=\sup\limits_{x\in\R^N}Q(x).
\end{equation}
\begin{itemize}
\item[(i)] There is $k_0>0$ such that for all $k>k_0$ the problem \eqref{eqn:nlh} admits a dual
ground state.

\item[(ii)] Let $(k_n)_n\subset(k_0,\infty)$ satisfy $\lim\limits_{n\to\infty}k_n=\infty$ 
and consider for each $n$, a dual ground state $u_n$ of
$$
-\Delta u + k_n u=Q(x)|u|^{p-2}u \quad\text{in }\ \R^N.
$$
Then there is a maximum point $x_0$ of $Q$, a dual ground state $u_0$ of
\begin{equation}\label{eqn:limit_Q0}
-\Delta u -u =Q_0|u|^{p-2}u \quad\text{in }\ \R^N
\end{equation}
and a sequence $(x_n)_n\subset\R^N$ such that (up to a subsequence)
$\lim\limits_{n\to\infty}x_n= x_0$ and  
$$
k_n^{-\frac{2}{p-2}}u_n\left(\frac{\cdot}{k_n}+x_n\right)\to u_0\quad\text{ in }\ L^p(\R^N),\ 
\text{ as }n\to\infty.
$$ 
\end{itemize}
\end{theorem}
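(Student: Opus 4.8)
The plan is to set up a dual variational scheme parallel to \cite{rabinowitz91,wang93}, but working in $L^{p'}(\R^N)$ with the functionals $J_\eps$ and passing to the limit $\eps=k^{-1}\to 0$. First I would record the basic properties of $J_\eps$: using the $L^{p'}\!\to L^p$ boundedness of $\mR$ that underlies the framework of \cite{evequoz-weth-dual} (valid precisely because $p>\frac{2(N+1)}{N-1}$), one checks that $J_\eps$ has the mountain pass geometry uniformly in small $\eps$, so that the mountain pass level $c_\eps:=\inf_{\gamma}\max_t J_\eps(\gamma(t))$ is well defined and positive. For part (i) the key point is to show that $c_\eps$ is \emph{attained}: since $\mR$ is not compact on $\R^N$, I would use the concentration-compactness alternative on a Palais--Smale sequence at level $c_\eps$, ruling out vanishing (it would force the level to be $0$) and dichotomy (using that $Q\geq 0$ and the condition \eqref{eqn:global_cond}, which makes concentration at infinity strictly less favorable). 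The comparison that does this is the crucial inequality $c_\eps<c_\infty$, where $c_\infty$ is the mountain pass level of the limit functional associated to the constant coefficient $Q_\infty$; this is obtained by testing with a suitably rescaled ground state of \eqref{eqn:limit_Q0} placed near a maximum point of $Q$, exactly as in \cite{rabinowitz91}. Together with $c_\eps\to \ell_{Q_0}$ (the limit problem level for $Q_0$) as $\eps\to0$ — which follows by the same test-function computation, now exploiting that $Q_\eps\to Q_0$ uniformly on compact sets around the maximum — one gets existence for all $k>k_0$ and, via \eqref{eqn:u_transf_dual_introd}, a dual ground state $u_n$.

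For part (ii), let $v_n\in L^{p'}(\R^N)$ be the mountain pass critical point of $J_{\eps_n}$ with $\eps_n=k_n^{-1}$. The first step is a uniform bound: since $J_{\eps_n}(v_n)=c_{\eps_n}$ is bounded and $J_{\eps_n}'(v_n)=0$, the standard manipulation $c_{\eps_n}=J_{\eps_n}(v_n)-\frac12 J_{\eps_n}'(v_n)v_n=(\frac1{p'}-\frac12)\|v_n\|_{p'}^{p'}$ gives $\|v_n\|_{p'}$ bounded away from $0$ and $\infty$. The second step is to locate the mass: applying the concentration-compactness lemma to $(|v_n|^{p'})$, vanishing is excluded because it would force $\|v_n\|_{p'}\to 0$ (via the estimate $\|v_n\|_{p'}^{p'}\le \|Q_{\eps_n}^{1/p}v_n\|_p\,\|\mR(Q_{\eps_n}^{1/p}v_n)\|_p$ and a vanishing-type decay for $\mR$), and dichotomy is excluded by the strict inequality $c_{\eps_n}<$ (sum of two separated mountain pass levels), which holds for $n$ large because each such level is $\geq \ell_{Q_0}-o(1)$ while $c_{\eps_n}\to \ell_{Q_0}$. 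Hence there exist $x_n\in\R^N$ and $R,\delta>0$ with $\int_{B_R(x_n)}|v_n|^{p'}\geq\delta$ for all $n$. Translating, $\tilde v_n:=v_n(\cdot+x_n)$ is a critical point of the translated functional; a compactness argument (weak $L^{p'}$ limit $\tilde v_n\weak v_0\neq0$, strong convergence after subtracting, using that the limit functional's critical points at the relevant level are nondegenerate in the Nehari/mountain-pass sense) gives $\tilde v_n\to v_0$ in $L^{p'}$. The third step identifies the limit: the shifted coefficients $Q_{\eps_n}(\cdot+x_n)=Q(\eps_n(\cdot+x_n))$ must converge, along a subsequence, to a constant $c\in[Q_\infty,Q_0]$; comparing energies shows $v_0$ solves the dual equation for the constant $c$ with level $\ell_c$, and since $c_{\eps_n}\to\ell_{Q_0}=\min_{c\in[Q_\infty,Q_0]}\ell_c$ (the level $\ell_c$ being strictly decreasing in $c$) we must have $c=Q_0$. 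Finally, $\eps_n x_n$ stays bounded (otherwise $Q(\eps_n x_n)\to Q_\infty<Q_0$) and every limit point $x_0$ of $\eps_n x_n$ satisfies $Q(x_0)=Q_0$; passing to that subsequence and translating the definition \eqref{eqn:u_transf_dual_introd} turns $\tilde v_n\to v_0$ into $k_n^{-2/(p-2)}u_n(\cdot/k_n+\eps_n x_n)\to u_0$ in $L^p(\R^N)$ with $u_0$ the dual ground state of \eqref{eqn:limit_Q0} built from $v_0$.

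The main obstacle is the lack of compactness of $\mR$ on all of $\R^N$ combined with the absence of a sign or uniqueness for solutions of the Helmholtz limit problem \eqref{eqn:NLH_eps_limit}: the usual splitting lemmas for Palais--Smale sequences (which in the Schrödinger case rely on the positive-definite quadratic form $\int(|\nabla u|^2+V u^2)$) must here be replaced by $L^{p'}$-based concentration-compactness for the \emph{dual} functional, and every energy comparison has to go through the auxiliary constants $\ell_c$ rather than through a unique limiting ground state. The two inequalities that carry the whole argument are $c_\eps\to\ell_{Q_0}$ and the strict level comparisons excluding dichotomy/concentration at infinity; establishing the first requires a careful test-function estimate showing that a ground state of \eqref{eqn:limit_Q0}, rescaled and cut off, is an almost-critical point of $J_\eps$ with energy $\ell_{Q_0}+o(1)$, and I expect the decay estimates for $\mR$ from \cite{evequoz-weth-dual} to be exactly what makes the cut-off error negligible.
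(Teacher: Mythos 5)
Your overall outline does track the paper's strategy: a dual mountain-pass framework in $L^{p'}(\R^N)$, test functions obtained by rescaling a ground state of the limit problem near a maximum point of $Q$ to get $c_\eps\to c_0<c_\infty$, compactness strictly below the level at infinity, translation of the critical points, identification of the limiting coefficient as $Q_0$, and transfer back via \eqref{eqn:u_transf_dual_introd}. However, there are two genuine gaps. The first is the compactness step itself (exclusion of dichotomy in (i), and the strong convergence of translated profiles in (ii)). You assert that a strict level inequality rules out splitting, but in this dual setting the quadratic part $\int Q_\eps^{1/p}v\,\mR(Q_\eps^{1/p}v)\,dx$ is nonlocal and sign-indefinite, with kernel decaying only like $|x|^{-(N-1)/2}$; when a Palais--Smale sequence is cut into a piece supported in a ball and a piece supported far away, the cross interaction is neither zero nor nonnegative, so the energy is not obviously superadditive up to $o(1)$ and the classical concentration-compactness bookkeeping does not close. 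The paper fills exactly this hole with a quantitative decay estimate for the interaction of functions with $r$-separated supports, $\bigl|\int u\,\mR v\,dx\bigr|\le C r^{-\lambda_p}\|u\|_{p'}\|v\|_{p'}$ (Lemma \ref{lem:interaction}), combined with the nonvanishing theorem of \cite{evequoz-weth-dual} as a substitute for Lions' vanishing lemma, and with a careful projection of the tail onto $\cN_\infty$ using $Q_\eps\le Q_\infty+\eta$ outside a large ball (Lemma \ref{lem:PS_J_eps}). Without an estimate of this kind your dichotomy exclusion, and hence both the attainment of $c_\eps$ and the localization of mass in (ii), is not established.

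The second gap is your appeal, in part (ii), to critical points of the limit functional being ``nondegenerate in the Nehari/mountain-pass sense'' to upgrade weak to strong convergence. No nondegeneracy or uniqueness is known for the nonlinear Helmholtz limit problem \eqref{eqn:limit_Q0} (the paper emphasizes this), so that step is unavailable as stated. It is also unnecessary: since the limiting level is the minimal level $c_0$, a representation lemma for Palais--Smale sequences of the constant-coefficient functional (Lemma \ref{lem:splitting}, built from the nonvanishing theorem, Br\'ezis--Lieb, and translation invariance) forces a single profile, and convergence of the $L^{p'}$ norms together with weak convergence then yields strong convergence; this is the route the paper takes, after first projecting $v_n$ onto $\cN_0$ via $v_{0,n}=(Q_{\eps_n}/Q_0)^{1/p}v_n$ and applying Ekeland. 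Two smaller points: your H\"older step should read $\|v_n\|_{p'}^{p'}\le\|Q_{\eps_n}^{1/p}v_n\|_{p'}\,\|\mR(Q_{\eps_n}^{1/p}v_n)\|_p$ (not with the $L^p$ norm in the first factor), and the claim that the shifted coefficients $Q(\eps_n\cdot+\eps_nx_n)$ converge to a constant along a subsequence is delicate when $\eps_nx_n$ is unbounded; the paper avoids this by a Fatou argument using only $\limsup_{|x|\to\infty}Q\le Q_\infty<Q_0$ to exclude that case, and only then passes to a limit point $x_0$ with $Q(x_0)=Q_0$.
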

In the Schr\"odinger equation \eqref{eqn:NLS_eps} with $V\equiv 1$, Wang and Zeng \cite{wang-zeng97} noticed that 
\eqref{eqn:global_cond} plays the same role as the Rabinowitz condition \eqref{eqn:liminf_V}. 
As a consequence of Theorem \ref{thm1}, we see that this condition also ensures the concentration, 
in the $L^p$-sense, for \eqref{eqn:NLS_eps} with $V\equiv -1$. To the best of our knowledge, this is the first 
concentration result for semilinear problems where $0$ lies in the interior of the essential spectrum 
of the linearization.

The proof of the above theorem is given in Section 3. It relies on the fact that, due to \eqref{eqn:global_cond}, 
the dual energy functional satisfies the Palais-Smale condition at all levels strictly
below the least among all possible energy levels for the problem at infinity. 
In contrast to similar problems where the dual method is used (see, e.g., \cite{alves-yang-soares03}), 
we have no sign information about the nonlocal term appearing in the dual energy functional,
since the resolvent Helmholtz operator is not positive definite. In order to handle this term, 
we derive a new energy estimate (Lemma \ref{lem:interaction}) for the nonlocal interaction between 
functions with disjoint support, which we believe to be of independent interest. 
The proof of the $L^p$-concentration in Part (ii) of the above theorem is given in Theorem \ref{thm:conc_gs}. 
The main ingredients are an energy comparison with the limit problem \eqref{eqn:limit_Q0} and a representation
lemma for Palais-Smale sequences (Lemma \ref{lem:splitting}) in the spirit of and Benci and Cerami \cite{benci-cerami87}.

The second main result in this paper is the following multiplicity result for
\eqref{eqn:nlh} with $k>0$ large. Here, $M=\{x\in\R^N\, :\, Q(x)=Q_0\}$
denotes the set of maximum points of $Q$, and for $\delta>0$ we let
$M_\delta=\{x\in\R^N\, :\, \text{dist}(x,M)\leq\delta\}$.
Also, for a closed subset $Y$ of a metric space $X$ we denote by $\text{cat}_X(Y)$ 
the Ljusternik-Schnirelman category of $Y$ with respect to $X$, i.e., the least number
of closed contractible sets in $X$ which cover $Y$.
\begin{theorem}\label{thm2}
Let $N\geq 2$, $\frac{2(N+1)}{N-1}<p<\frac{2N}{N-2}$ (resp. $6<p<\infty$ if $N=2$) 
and consider a bounded and continuous function $Q\geq 0$
satisfying \eqref{eqn:global_cond}. 
For every $\delta>0$, there exists $k(\delta)>0$ such that \eqref{eqn:nlh} 
has at least $\operatorname*{cat}_{M_\delta}(M)$
nontrivial solutions for all $k>k(\delta)$.
\end{theorem}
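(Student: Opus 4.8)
\section*{Proof proposal for Theorem \ref{thm2}}

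The plan is to use the Ljusternik--Schnirelman category theory on a suitable sublevel set of the dual energy functional $J_\eps$, combining the concentration analysis behind Theorem \ref{thm1} with a standard ``barycenter map'' argument. Fix $\delta>0$. By Theorem \ref{thm1}(ii) and the energy comparison with the limit problem, there is a threshold $c_\infty(\eps)$ (the least energy level for the problem at infinity, with $Q_\infty$ in place of $Q_0$) such that $J_\eps$ satisfies the Palais--Smale condition at every level in $(c_\eps, c_\infty(\eps))$, where $c_\eps$ denotes the mountain pass level; moreover $\limsup_{\eps\to 0} c_\eps < \liminf_{\eps\to 0} c_\infty(\eps)$ after the appropriate normalization. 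The idea is to count critical points of $J_\eps$ restricted to the Nehari-type manifold $\cN_\eps$ inside the sublevel set $\{J_\eps\le c_\eps + h(\eps)\}$ for a carefully chosen $h(\eps)\to 0$, using the inequality $\operatorname{cat}(\{J_\eps\le c_\eps+h(\eps)\}\cap\cN_\eps)\le \#\{\text{critical points there}\}$, which holds once the Palais--Smale condition is available below $c_\infty(\eps)$.

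The two maps needed are as follows. First, a localization (barycenter) map $\beta_\eps\colon \{J_\eps\le c_\eps+h(\eps)\}\cap\cN_\eps \to M_\delta$: given such a $v$, the function $Q_\eps^{1/p}\mR(Q_\eps^{1/p}v)$ carries most of its mass, in the $L^p$-sense, near $\eps^{-1}M$ (this is exactly the content of the concentration analysis in Lemma \ref{lem:splitting} and Theorem \ref{thm:conc_gs}, applied not just to ground states but to all low-energy Palais--Smale sequences), so a suitably defined weighted center of mass, rescaled by $\eps$, lands in $M_\delta$ for $\eps$ small. Second, going the other way, for each $y\in M$ I would build a test function by transplanting a dual ground state $w_0$ of the limit equation \eqref{eqn:limit_Q0} to the point $\eps^{-1}y$ (cut off and projected onto $\cN_\eps$), giving a continuous map $\Phi_\eps\colon M\to \{J_\eps\le c_\eps+h(\eps)\}\cap\cN_\eps$ whose energy tends to $c_\eps$ uniformly in $y\in M$ as $\eps\to 0$. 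By construction the composition $\beta_\eps\circ\Phi_\eps$ is homotopic to the inclusion $M\embed M_\delta$ (for $\eps$ small it is $\delta$-close to it, hence homotopic), and the standard category inequality
$$
\operatorname{cat}_{M_\delta}(M)\ \le\ \operatorname{cat}\bigl(\{J_\eps\le c_\eps+h(\eps)\}\cap\cN_\eps\bigr)
$$
follows; combined with the Palais--Smale condition this yields at least $\operatorname{cat}_{M_\delta}(M)$ critical points of $J_\eps$ on $\cN_\eps$, each producing via \eqref{eqn:u_transf_dual_introd} a nontrivial solution of \eqref{eqn:nlh} with $k=\eps^{-1}$. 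Choosing $k(\delta)=\eps(\delta)^{-1}$ finishes the proof.

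The main obstacle is constructing the barycenter map $\beta_\eps$ and verifying that it takes values in $M_\delta$ uniformly over the whole sublevel set, not merely along a single concentrating sequence. Concretely, one needs a \emph{uniform} concentration statement: there exist $R>0$ and $h(\eps)\to 0$ such that every $v\in\cN_\eps$ with $J_\eps(v)\le c_\eps+h(\eps)$ satisfies $\int_{B_R(\eps^{-1}y_v)}|Q_\eps^{1/p}\mR(Q_\eps^{1/p}v)|^p\ge \tfrac12\|\,\cdot\,\|_p^p$ for some $y_v$ with $\dist(y_v,M)\le\delta$. This is proved by contradiction: a sequence violating it would be a Palais--Smale sequence for $J_{\eps_n}$ at level $\to c_0$ (the limit mountain pass level) whose mass either escapes to infinity or splits, and the splitting lemma together with $Q_\infty<Q_0$ rules both out, exactly as in the proof of Theorem \ref{thm1}(ii) --- but one must be careful that the absence of sign control on the nonlocal term (handled via Lemma \ref{lem:interaction}) does not obstruct the decomposition. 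The remaining ingredients --- the test-function estimate for $\Phi_\eps$ and the homotopy $\beta_\eps\circ\Phi_\eps\simeq \mathrm{id}$ --- are by now routine once the dual-energy expansion near a transplanted ground state is in hand.
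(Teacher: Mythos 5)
Your proposal follows essentially the same route as the paper's proof: test functions obtained by transplanting a limit dual ground state and projecting onto $\cN_\eps$ (Lemma \ref{lem:phi}), a barycenter map on the low-energy sublevel set $\Sigma_\eps\subset\cN_\eps$ whose values land in $M_\delta$ by the concentration analysis of Proposition \ref{prop:limit_gs} (Lemma \ref{lem:beta_sigma}), the homotopy of the composition to the inclusion $M\embed M_\delta$ giving $\cat_{\Sigma_\eps}(\Sigma_\eps)\geq\cat_{M_\delta}(M)$, and Ljusternik--Schnirelman theory on the complete $C^1$-manifold $\cN_\eps$ once the Palais--Smale condition below $c_\infty$ (Lemma \ref{lem:PS_J_eps}) is available. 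The only cosmetic differences are that the paper takes the barycenter of $|v|^{p'}$ itself, truncated through the cutoff $\Xi$ so that it is well defined and continuous on $L^{p'}(\R^N)$, rather than of the primal function $\mR(Q_\eps^{1/p}v)$ in $L^p$, and that the threshold $c_\infty$ is a fixed level independent of $\eps$ since $Q_\infty$ is constant.
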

In the case where $Q_\infty=0$, Palais-Smale sequences for the dual functional are relatively compact
and a mountain pass argument was used in \cite{evequoz-weth-dual} to obtain the existence
of infinitely many solutions. When $Q_\infty>0$, only Palais-Smale sequences below the least-energy 
level at infinity are relatively compact. This loss of compactness has to be handled in order to
prove the existence of multiple solutions. Our proof uses topological arguments close to the ones developed by 
Cingolani and Lazzo \cite{cingolani-lazzo97} for \eqref{eqn:NLS_eps} (see also \cite{cingolani-lazzo00}) 
and based on ideas of Benci, Cerami and Passaseo
\cite{benci-cerami91, benci-cerami-passaseo91} for problems on bounded domains.
The main point lies in the construction of two maps whose composition is homotopic to the inclusion
$M\embed M_\delta$.
For more results concerning the multiplicity of solutions for small $\eps>0$
in the Schr\"odinger equation \eqref{eqn:NLS_eps} with $\inf V>0$,
the interested reader my consult the very recent paper by Cingolani, Jeanjean and Tanaka \cite{cingolani-jeanjean-tanaka2015}
and the references therein.

\bigskip

The paper is organized as follows. In Section 2, we describe the dual variational framework
set up in \cite{evequoz-weth-dual} for the study of the problem \eqref{eqn:nlh} with fixed $k$, and
discuss the basic properties of the associated Nehari manifold.
Next, we establish a representation lemma for Palais-Smale sequences of the dual
energy functional in the case of constant $Q$. 
The section concludes with the proof of the Palais-Smale condition for the dual energy functional
on the Nehari manifold below some limit energy level. A crucial element in the proof 
of this result is the decay estimate for the nonlocal interaction induced by the Helmholtz 
resolvent operator given in Lemma~\ref{lem:interaction}. 
In Section 3, we start by proving that for small $\eps=k^{-1}>0$ the least-energy level for critical points 
of the dual energy functional is attained (Proposition \ref{prop:c_eps_attained}). As a consequence of this,
we obtain Part (i) in Theorem \ref{thm1}. In a second part, the concentration in the limit $\eps=k^{-1}\to 0$
is established for sequences of ground-states in the dual formulation (Proposition~\ref{prop:limit_gs})
and this allows us to prove Part (ii) in Theorem \ref{thm1}. The last section, Section 4, 
is devoted to the proof of Theorem \ref{thm2}.


\section{The variational framework}
\subsection{Notation and preliminaries}
Throughout the paper, we let $N\geq 2$ and consider a nonnegative function
$Q\in L^\infty(\R^N)\backslash\{0\}$. Setting $2_\ast:=\frac{2(N+1)}{N-1}$ and $2^\ast:=\frac{2N}{N-2}$ 
if $N\geq 3$, resp. $2^\ast:=\infty$ if $N=2$, we fix an exponent $p\in(2_\ast,2^\ast)$ and 
we let $p'=\frac{p}{p-1}$ denote its conjugate exponent.
For $1\leq q\leq\infty$, we write $\|\cdot\|_q$ instead of $\|\cdot\|_{L^q(\R^N)}$ 
for the standard norm of the Lebesgue space $L^q(\R^N)$. In addition, for $r>0$ and $x\in\R^N$, we denote
by $B_r(x)$ the open ball in $\R^N$ of radius $r$ centered at $x$, and let $B_r=B_r(0)$.

With this notation, we consider for $k>0$ the equation
\begin{equation}\label{eqn:sp}
-\Delta u -k^2u=Q(x)|u|^{p-2}u \quad\text{in }\ \R^N.
\end{equation}
Setting $\eps=k^{-1}$, $u_\eps(x)=\eps^{\frac2{p-2}}u(\eps x)$ and $Q_\eps(x)=Q(\eps x)$, $x\in\R^N$, 
\eqref{eqn:sp} can be rewritten as
\begin{equation}\label{eqn:sp2}
-\Delta u_\eps -u_\eps = Q_\eps(x)|u_\eps|^{p-2}u_\eps \quad\text{in }\ \R^N.
\end{equation}
Consider the fundamental solution
\begin{equation}\label{eqn:phi}
\Phi(x)=\frac{i}{4}(2\pi|x|)^{\frac{2-N}{2}}H^{(1)}_{\frac{N-2}{2}}(|x|), \quad x\in\R^N\backslash\{0\},
\end{equation}
of the Helmholtz equation $-\Delta u-u=0$, where $H^{(1)}_\nu$ denotes the Hankel function 
of the first kind of order $\nu$.
As a consequence of estimates by Kenig, Ruiz and Sogge~\cite{KRS87}, the operator $\mR$,
defined on the Schwartz space $\cS(\R^N)$ of rapidly decreasing functions 
by the convolution
$$
\mR f=\text{Re}(\Phi)\ast f, \quad f\in\cS(\R^N),
$$ 
has a continuous extension $\mR$: $L^{p'}(\R^N)$ $\to$ $L^p(\R^N)$.
Using this operator, we define
the $C^1$-functional $J_\eps$: $L^{p'}(\R^N)$ $\to$ $\R$,
$$
J_\eps(v):=\frac1{p'}\int_{\R^N}|v|^{p'}\, dx 
-\frac12\int_{\R^N}Q_\eps^\frac1pv\mR(Q_\eps^\frac1pv)\, dx, \quad v\in L^{p'}(\R^N)
$$
(for more details on the construction of $\mR$ and $J_\eps$, see \cite{evequoz-weth-dual}).
Every critical point of $J_\eps$ corresponds to a solution of \eqref{eqn:sp2} in the following way.
A function $v\in L^{p'}(\R^N)$ satisfies $J_\eps'(v)=0$ if and only if it solves the integral equation
$$
|v|^{p'-2}v=Q_\eps^\frac1p \mR(Q_\eps^\frac1p v).
$$
Setting $u=\mR(Q_\eps^\frac1p v)$, it is equivalent to
\begin{equation}\label{eqn:u_fp}
u= \mR(Q_\eps|u|^{p-2}u)
\end{equation}
and since $\mR$ is a right inverse for the Helmholtz operator $-\Delta-1$, it follows
that $u$ is a strong solution of \eqref{eqn:sp2} (see \cite[Lemma 4.3 and Theorem 4.4]{evequoz-weth-dual} 
concerning the regularity and asymptotic behavior of $u$).
Conversely, if $u$ solves \eqref{eqn:u_fp}, then $v=Q_\eps^\frac1{p'}|u|^{p-2}u$ is a critical point of $J_\eps$.
Notice that distinct critical points correspond to distinct solutions of \eqref{eqn:u_fp} and therefore of
\eqref{eqn:sp2}.

Let us now recall some properties of the dual functional, obtained 
in \cite{e-helm-2d,e-helmholtz-periodic-asympt,evequoz-weth-dual}.
Since $p'<2$ and since the kernel of the operator $\mR$ is positive close to the origin, 
the geometry of the functional $J_\eps$ is of mountain pass type:
\begin{align}
\exists\; \alpha>0\text{ and }\rho>0\text{ such that }J_\eps(v)\geq\alpha>0, 
\quad \forall v\in L^{p'}(\R^N)\text{ with }\|v\|_{p'}=\rho.\label{eqn:J_local_min}\\
\exists\; v_0\in L^{p'}(\R^N)\text{ such that }\|v_0\|_{p'}>\rho\text{ and }J_\eps(v_0)<0.\label{eqn:J_v0_neg}
\end{align}
As a consequence, the Nehari set associated to $J_\eps$:
$$
\cN_\eps:=\{v\in L^{p'}(\R^N)\backslash\{0\}\ :\ J'_\eps(v)v=0\},
$$
is not empty. More precisely, by \eqref{eqn:J_v0_neg}, the set
$$
U^+_\eps:=\left\{ v\in L^{p'}(\R^N)\, :\, \int_{\R^N}Q_\eps^\frac1pv\mR(Q_\eps^\frac1pv)\, dx>0\right\}
$$
is not empty and for each $v\in U^+_\eps$ there is a unique $t_v>0$ for which $t_vv\in\cN_\eps$ holds. 
It is given by
\begin{equation}\label{eqn:t_v}
t_v^{2-p'}=\frac{\int_{\R^N}|v|^{p'}\, dx}{\int_{\R^N}Q^\frac1pv\mR(Q^\frac1pv)\, dx}.
\end{equation}
In addition, $t_v$ is the unique maximum point of $t\mapsto J_\eps(tv)$, $t\geq 0$.
Using \eqref{eqn:J_local_min} we obtain in particular
$$
c_\eps:=\inf\limits_{\cN_\eps}J_\eps=\inf\limits_{v\in U_\eps^+}J_\eps(t_vv)>0.
$$
Moreover, for every $v\in \cN_\eps$ we have
$c_\eps\leq J_\eps(v)=\left(\frac1{p'}-\frac12\right)\|v\|_{p'}^{p'}$. Hence,
$0$ is isolated in the set $\{v\in L^{p'}(\R^N)\, :\, J_\eps'(v)v=0\}$ and, 
as a consequence, the $C^1$-submanifold $\cN_\eps$ of $L^{p'}(\R^N)$ is complete.

\bigskip

We recall that $(v_n)_n\subset L^{p'}(\R^N)$ is termed a Palais-Smale sequence,
or a (PS)-sequence, for $J_\eps$ if $(J_\eps(v_n))_n$ is bounded and $J'_\eps(v_n)\to 0$
as $n\to\infty$. Also, for $d>0$, we say that $(v_n)_n$ is a (PS)$_d$-sequence for $J_\eps$
if it is a (PS)-sequence and if $J_\eps(v_n)\to d$ as $n\to\infty$.
The following properties hold (see \cite[Sect.~2]{e-helmholtz-periodic-asympt}).
\begin{lemma}\label{lem:PS_sequences}
Let $(v_n)_n\subset L^{p'}(\R^N)$ be a Palais-Smale sequence for $J_\eps$. 
Then $(v_n)_n$ is bounded and there exists $v\in L^{p'}(\R^N)$ such that $J_\eps'(v)=0$ 
and (up to a subsequence) $v_n\weakto v$ weakly in $L^{p'}(\R^N)$ 
and $J_\eps(v)\leq\liminf\limits_{n\to\infty} J_\eps(v_n)$.

Moreover, for every bounded and measurable set $B\subset\R^N$, 
$1_Bv_n\to 1_Bv$ strongly in $L^{p'}(\R^N)$. 
\end{lemma}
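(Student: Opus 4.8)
The plan is to split the statement into three assertions: boundedness, weak convergence to a critical point with lower semicontinuity of the energy, and local strong convergence. I would start with boundedness. Since $(v_n)_n$ is a (PS)-sequence, one has $J_\eps(v_n)=O(1)$ and $J_\eps'(v_n)v_n=o(\|v_n\|_{p'})$. Forming the standard combination $J_\eps(v_n)-\tfrac12 J_\eps'(v_n)v_n=\left(\tfrac1{p'}-\tfrac12\right)\|v_n\|_{p'}^{p'}$, and using $p'<2$ so that the coefficient $\tfrac1{p'}-\tfrac12>0$, we get $\left(\tfrac1{p'}-\tfrac12\right)\|v_n\|_{p'}^{p'}\leq C+o(\|v_n\|_{p'})$, which forces $(\|v_n\|_{p'})_n$ to be bounded. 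Passing to a subsequence, $v_n\weakto v$ weakly in $L^{p'}(\R^N)$ (note $L^{p'}$ is reflexive since $1<p'<2$), and correspondingly $Q_\eps^{1/p}v_n\weakto Q_\eps^{1/p}v$ weakly in $L^{p'}$ as well, since multiplication by the bounded function $Q_\eps^{1/p}$ is a bounded linear operator.

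Next I would identify the weak limit as a critical point. The key observation is that $\mR\colon L^{p'}(\R^N)\to L^p(\R^N)$ is a bounded linear operator which, moreover, is \emph{self-adjoint} with respect to the $L^{p'}$--$L^p$ duality (it is a convolution with the even function $\Re\Phi$), and, crucially, the composition with the multiplication operators and the duality pairing behaves well under weak convergence because of local compactness. Concretely, to show $J_\eps'(v)=0$ I would test $J_\eps'(v_n)$ against an arbitrary $\varphi\in L^{p'}(\R^N)$ with compact support, write $J_\eps'(v_n)\varphi = \int |v_n|^{p'-2}v_n\varphi\,dx - \int Q_\eps^{1/p}\varphi\,\mR(Q_\eps^{1/p}v_n)\,dx$, and pass to the limit term by term. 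The nonlocal term converges because $\mR(Q_\eps^{1/p}v_n)\weakto \mR(Q_\eps^{1/p}v)$ weakly in $L^p$ (as $\mR$ is bounded linear, hence weak-weak continuous) and $Q_\eps^{1/p}\varphi\in L^{p'}$ is a fixed test element. The nonlinear term $\int|v_n|^{p'-2}v_n\varphi\,dx$ requires a.e.\ convergence of $(v_n)$ on the support of $\varphi$ plus uniform integrability of $|v_n|^{p'-1}$; this is exactly where I expect the main work, and it is bootstrapped from the last assertion, so I would actually prove the local strong convergence first.

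To prove that $1_B v_n\to 1_B v$ strongly in $L^{p'}$ for every bounded measurable $B$, the strategy is a regularity/concentration-compactness argument exploiting the structure of the fixed-point equation: along a (PS)-sequence the functions $u_n:=\mR(Q_\eps^{1/p}v_n)$ solve an approximate version of $u_n=\mR(Q_\eps|u_n|^{p-2}u_n)$ with an error going to zero in $L^p$, and one can invoke the smoothing and local $L^\infty$-type estimates for $\mR$ established in \cite{evequoz-weth-dual} (Lemma 4.3 and Theorem 4.4 there) to get that $(u_n)$ is bounded in $L^\infty_{loc}$, hence $(v_n)$ is bounded in $L^\infty_{loc}$ via $v_n=Q_\eps^{1/p'}|u_n|^{p-2}u_n + o(1)$. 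Boundedness of $(v_n)$ in $L^\infty(B')$ for $B'\supset B$ a slightly larger ball, combined with the local compactness of $\mR$ and an Ascoli/elliptic-type compactness argument (or directly the compact embedding consequences of the smoothing of $\mR$), upgrades the weak convergence $v_n\weakto v$ to strong convergence in $L^{p'}(B)$; once we have a.e.\ convergence plus the $L^\infty_{loc}$ bound, Vitali's theorem gives $1_Bv_n\to 1_Bv$ in $L^{p'}(B)$. With this in hand, the a.e.\ convergence and uniform integrability needed in the previous paragraph are available, so $J_\eps'(v)=0$ follows. Finally, the lower semicontinuity $J_\eps(v)\leq\liminf J_\eps(v_n)$ follows by writing $J_\eps(v_n)=\left(\tfrac1{p'}-\tfrac12\right)\|v_n\|_{p'}^{p'}+\tfrac12 J_\eps'(v_n)v_n$, using $J_\eps'(v_n)v_n\to 0$, and applying weak lower semicontinuity of the norm $\|\cdot\|_{p'}$ together with $\tfrac1{p'}-\tfrac12>0$; alternatively, one uses that $J_\eps(v)=\left(\tfrac1{p'}-\tfrac12\right)\|v\|_{p'}^{p'}$ as well since $J_\eps'(v)v=0$. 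The main obstacle is the local $L^\infty$ bound and the resulting local compactness, which is genuinely where the special regularizing properties of the Helmholtz resolvent $\mR$ (and not just its boundedness $L^{p'}\to L^p$) enter.
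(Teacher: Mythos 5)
Your boundedness argument, the weak--weak continuity of $v\mapsto\mR(Q_\eps^{1/p}v)$ used to pass to the limit in the nonlocal term, and the lower-semicontinuity step via $J_\eps(v_n)=\bigl(\tfrac1{p'}-\tfrac12\bigr)\|v_n\|_{p'}^{p'}+\tfrac12 J_\eps'(v_n)v_n$ together with $J_\eps'(v)v=0$ are all correct, and your overall architecture (establish the local strong convergence first, then test $J_\eps'(v_n)$ against compactly supported $\varphi$) is the right one; note that the paper itself does not prove this lemma but refers to \cite[Sect.~2]{e-helmholtz-periodic-asympt}.

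The genuine gap is in the key step, the local strong convergence. You propose to obtain uniform $L^\infty_{\mathrm{loc}}$ bounds on $u_n=\mR(Q_\eps^{1/p}v_n)$ from the regularity results of \cite{evequoz-weth-dual} (Lemma 4.3, Theorem 4.4); those are proved by bootstrapping the \emph{exact} integral equation $u=\mR(Q_\eps|u|^{p-2}u)$, whereas along a Palais--Smale sequence one only has $|v_n|^{p'-2}v_n=Q_\eps^{1/p}u_n+e_n$ with $e_n=J_\eps'(v_n)\to0$ in $L^p(\R^N)$ and in no better norm. This error caps the integrability: $|v_n|\leq C\bigl(|u_n|^{p-1}+|e_n|^{p-1}\bigr)$ and $|e_n|^{p-1}$ lies only in $L^{p'}$, so the source term $Q_\eps^{1/p}v_n$ never improves beyond $L^{p'}$, the bootstrap stalls after one step, and no uniform $L^\infty_{\mathrm{loc}}$ bound for $(u_n)$ or $(v_n)$ follows; likewise your identity $v_n=Q_\eps^{1/p'}|u_n|^{p-2}u_n+o(1)$ holds with $o(1)$ measured at best in $L^{p'}$, not uniformly, so even a bound on $u_n$ would not bound $v_n$ in $L^\infty_{\mathrm{loc}}$. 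The $L^\infty$ detour is, however, unnecessary: $u_n$ solves $-\Delta u_n-u_n=Q_\eps^{1/p}v_n$ with right-hand side bounded in $L^{p'}(\R^N)$ and $(u_n)$ bounded in $L^p(\R^N)$, so interior elliptic estimates bound $(u_n)$ in $W^{2,p'}(B')$ for any ball $B'\supset B$; since $p<2^\ast$, the embedding $W^{2,p'}(B')\hookrightarrow L^p(B')$ is compact, hence $u_n\to u$ in $L^p_{\mathrm{loc}}$ along a subsequence, with $u=\mR(Q_\eps^{1/p}v)$ by weak--weak continuity. Then $1_B|v_n|^{p'-2}v_n\to 1_BQ_\eps^{1/p}u$ in $L^p(B)$, and the continuity of the Nemytskii map $t\mapsto|t|^{p-2}t$ from $L^p(B)$ to $L^{p'}(B)$, together with uniqueness of the weak limit, yields $1_Bv_n\to 1_Bv$ in $L^{p'}(\R^N)$. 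With this replacement for your $L^\infty_{\mathrm{loc}}$/Ascoli step, the rest of your argument goes through.
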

As a consequence, we obtain the following characterization of 
the infimum $c_\eps$ of $J_\eps$ over the Nehari manifold $\cN_\eps$
(see \cite[Sect.~4]{e-helmholtz-periodic-asympt}).
\begin{lemma}\label{lem:nehari_mp}
(i) $c_\eps$ coincides with the mountain pass level, i.e.,
$$
c_\eps=\inf\limits_{\gamma\in\Gamma}\max\limits_{t\in[0,1]}J_\eps(\gamma(t)), \quad
\text{where}\quad \Gamma=\left\{\gamma\in C([0,1],L^{p'}(\R^N))\, :\, \gamma(0)=0
\text{ and }J_\eps(\gamma(1))<0\right\}.
$$
(ii) If $c_\eps$ is attained, then 
$c_\eps=\min\{J_\eps(v)\, :\, v\in L^{p'}(\R^N)\backslash\{0\}, \ J_\eps'(v)=0\}$.
\vspace{0.2cm}\\
(iii) If $Q_\eps(x)\to \inf\limits_{\R^N} Q$ as $|x|\to\infty$, then $c_\eps$ is attained.
\end{lemma}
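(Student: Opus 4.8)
The plan is to dispatch (i) and (ii) by the standard Nehari--mountain-pass calculus and to prove (iii) by a concentration-compactness argument built on the non-vanishing lemma of \cite{evequoz-weth-dual}. For (i) I would first get ``$\le$'': given $v\in U^+_\eps$, the ray $t\mapsto tv$ has $J_\eps(tv)\to-\infty$ (because $p'<2$), so a linear reparametrisation turns it into a path in $\Gamma$ whose maximum is $J_\eps(t_vv)$, and taking the infimum over $v$ while recalling $c_\eps=\inf_{v\in U^+_\eps}J_\eps(t_vv)$ gives the bound. For ``$\ge$'' I would show that every $\gamma\in\Gamma$ meets $\cN_\eps$. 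Set $g(t)=J_\eps'(\gamma(t))\gamma(t)=\|\gamma(t)\|_{p'}^{p'}-\int_{\R^N}Q_\eps^{1/p}\gamma(t)\,\mR(Q_\eps^{1/p}\gamma(t))\,dx$. Since $\mR\colon L^{p'}\to L^p$ is bounded, Hölder's inequality makes the nonlocal term $O(\|\gamma(t)\|_{p'}^2)$, so $g(t)>0$ as soon as $0<\|\gamma(t)\|_{p'}$ is below a fixed small $\rho_1$; on the other hand $J_\eps(\gamma(1))<0$ forces $\gamma(1)\in U^+_\eps$ with $t_{\gamma(1)}<1$ (the map $s\mapsto J_\eps(s\gamma(1))$ is nonnegative up to its unique maximum at $s=t_{\gamma(1)}$ and is negative at $s=1$), hence $g(1)<0$. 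Choosing $t_0$ as the last time in $[0,1]$ at which $\|\gamma\|_{p'}=\rho_1$ and applying the intermediate value theorem to $g$ on $[t_0,1]$ yields $t^\ast$ with $\gamma(t^\ast)\neq0$ and $J_\eps'(\gamma(t^\ast))\gamma(t^\ast)=0$, i.e.\ $\gamma(t^\ast)\in\cN_\eps$, so $\max J_\eps\circ\gamma\ge c_\eps$.

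For (ii), if $c_\eps$ is attained at $v_\ast\in\cN_\eps$, I would apply the Lagrange multiplier rule on the $C^1$-manifold $\cN_\eps=\{G=0\}$, $G(v)=J_\eps'(v)v$: there is $\mu$ with $J_\eps'(v_\ast)=\mu G'(v_\ast)$, and testing against $v_\ast$ gives $0=J_\eps'(v_\ast)v_\ast=\mu G'(v_\ast)v_\ast=\mu(p'-2)\|v_\ast\|_{p'}^{p'}$, whence $\mu=0$ since $p'<2$ and $v_\ast\neq0$; hence $J_\eps'(v_\ast)=0$. Conversely, every nontrivial critical point $v$ of $J_\eps$ satisfies $J_\eps'(v)v=0$, so it lies in $\cN_\eps$ and obeys $J_\eps(v)\ge c_\eps=J_\eps(v_\ast)$, which is the asserted minimality.

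For (iii), (i) together with \eqref{eqn:J_local_min}--\eqref{eqn:J_v0_neg} and the mountain pass theorem gives a $(PS)_{c_\eps}$-sequence $(v_n)$; by Lemma \ref{lem:PS_sequences} it is bounded, $v_n\weakto v$ along a subsequence with $J_\eps'(v)=0$ and $1_Bv_n\to1_Bv$ in $L^{p'}$ on bounded sets. If $v\neq0$, then $v\in\cN_\eps$, so $J_\eps(v)=\bigl(\tfrac1{p'}-\tfrac12\bigr)\|v\|_{p'}^{p'}\le\liminf_n\bigl(\tfrac1{p'}-\tfrac12\bigr)\|v_n\|_{p'}^{p'}=\liminf_n\bigl(J_\eps(v_n)-\tfrac12J_\eps'(v_n)v_n\bigr)=c_\eps$, and $J_\eps(v)\ge c_\eps$ because $v$ is a nontrivial critical point; so $c_\eps$ is attained. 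If $v=0$, put $Q_{\min}=\inf_{\R^N}Q$ and write $Q_\eps^{1/p}=Q_{\min}^{1/p}+b$ with $b\ge0$, $b(x)\to0$ as $|x|\to\infty$; since $1_{B_R}v_n\to0$ in $L^{p'}$ for each $R$ while $b$ is uniformly small outside $B_R$, one gets $\|bv_n\|_{p'}\to0$, hence $J_\eps(v_n)=J^\infty_\eps(v_n)+o(1)$ and $J_\eps'(v_n)=(J^\infty_\eps)'(v_n)+o(1)$ for the translation-invariant functional $J^\infty_\eps(w)=\tfrac1{p'}\|w\|_{p'}^{p'}-\tfrac12 Q_{\min}^{2/p}\int_{\R^N}w\,\mR w\,dx$. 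Thus $(v_n)$ is $(PS)_{c_\eps}$ for $J^\infty_\eps$ with $v_n\weakto0$, and since $c_\eps>0$ it is not $L^{p'}$-null; the non-vanishing lemma of \cite{evequoz-weth-dual} (whose validity is precisely where $2_\ast<p<2^\ast$ is used) then provides $\delta>0$ and $y_n$, necessarily $|y_n|\to\infty$ since $1_{B_R}v_n\to0$, with $\int_{B_1(y_n)}|v_n|^{p'}\,dx\ge\delta$. By translation invariance $\tilde v_n=v_n(\cdot+y_n)$ is again $(PS)_{c_\eps}$ for $J^\infty_\eps$, and $\tilde v_n\weakto\tilde v$ with $(J^\infty_\eps)'(\tilde v)=0$ and $\int_{B_1}|\tilde v|^{p'}\,dx\ge\delta$, so $\tilde v\neq0$. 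When $Q$ is constant --- the case needed for the limit equation \eqref{eqn:limit_Q0} --- one has $J^\infty_\eps=J_\eps$, so $\tilde v$ is a nontrivial critical point of $J_\eps$ with $J_\eps(\tilde v)=c_\eps$ exactly as above; otherwise $J^\infty_\eps(\tilde v)$ lies between the Nehari infimum $c^\infty_\eps$ of $J^\infty_\eps$ and $c_\eps$, forcing $c_\eps\ge c^\infty_\eps$, which contradicts the strict inequality $c_\eps<c^\infty_\eps$ (valid because $Q\ge Q_{\min}$ with strict inequality on a set of positive measure), so $v\neq0$ after all.

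The routine parts are the fibering step in (i) and the Lagrange multiplier computation in (ii). The real difficulty in (iii) is that the nonlocal form $w\mapsto\int w\,\mR w$ is sign-indefinite, so the ordinary Lions vanishing lemma is unavailable in $L^{p'}$ and there is no monotonicity of $c_\eps$ in $Q$ to rely on; I expect the two genuinely hard inputs to be (a) the Helmholtz non-vanishing lemma from \cite{evequoz-weth-dual}, which exploits the Stein--Tomas range $2_\ast<p<2^\ast$, and (b), when $Q\not\equiv Q_{\min}$, the strict energy comparison $c_\eps<c^\infty_\eps$, which needs a careful test-function argument in the spirit of \cite{e-helmholtz-periodic-asympt} rather than a naive pointwise estimate.
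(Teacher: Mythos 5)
Your (i) and (ii) are correct: the fibering/intermediate-value argument showing every path in $\Gamma$ crosses $\cN_\eps$ and the Lagrange-multiplier computation with $G'(v)v=(p'-2)\|v\|_{p'}^{p'}\neq 0$ on $\cN_\eps$ are exactly the standard dual-Nehari arguments (the paper itself does not prove this lemma but imports it from \cite[Sect.~4]{e-helmholtz-periodic-asympt}, and your outline of (iii) follows the concentration-compactness route one expects there). In (iii), however, two steps are not justified as written. The claim that $\|bv_n\|_{p'}\to 0$ already gives $J_\eps'(v_n)=(J^\infty_\eps)'(v_n)+o(1)$ in the dual norm does not follow: expanding $Q_\eps^{1/p}=Q_{\min}^{1/p}+b$ produces, besides terms controlled by $\|bv_n\|_{p'}$, the cross term $\int_{\R^N} b\,w\,\mR v_n\,dx$, which is only bounded by $\|b\|_\infty\|w\|_{p'}\|\mR v_n\|_p$ and does not vanish by weak convergence alone. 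To repair it you need $\|b\,\mR v_n\|_p\to0$, i.e.\ the local compactness of $\mR$ (interior $W^{2,p'}$-estimates for $(-\Delta-1)\mR v_n=v_n$ plus the compact embedding $W^{2,p'}(B_R)\hookrightarrow L^p(B_R)$, valid since $p<2^\ast$) together with the smallness of $b$ outside large balls; alternatively, avoid dual-norm convergence altogether by applying the nonvanishing theorem directly to $f_n=Q_\eps^{1/p}v_n$ (whose quadratic form stays bounded away from $0$), translating by $y_n$ with $|y_n|\to\infty$, identifying the weak limit $\tilde v$ as a critical point of $J^\infty_\eps$ by testing only against compactly supported $w$ (using $Q_\eps(\cdot+y_n)\to Q_{\min}$ locally uniformly), and getting $J^\infty_\eps(\tilde v)\le c_\eps$ from weak lower semicontinuity of the norm together with $J_\eps(v_n)-\tfrac12 J_\eps'(v_n)v_n\to c_\eps$.

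The second gap is the strict inequality $c_\eps<c^\infty_\eps$ for non-constant $Q$, which is the decisive input of your contradiction and which you only assert. It is true and elementary once the constant-coefficient case has been settled (so there is no circularity): take a minimizer $w$ of $J^\infty_\eps$, pick a translate $w(\cdot-z)$ with $\int_{\R^N}\bigl(1-(Q_{\min}/Q_\eps)^{p'/p}\bigr)|w(x-z)|^{p'}\,dx>0$ (possible by Tonelli, since $\{Q_\eps>Q_{\min}\}$ has positive measure), set $v=(Q_{\min}/Q_\eps)^{1/p}w(\cdot-z)$ and project onto $\cN_\eps$; then $t_v\le1$, $\|v\|_{p'}<\|w\|_{p'}$, and $J_\eps(t_vv)=\bigl(\tfrac1{p'}-\tfrac12\bigr)t_v^{p'}\|v\|_{p'}^{p'}<c^\infty_\eps$. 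You should also dispose explicitly of the degenerate case $\inf_{\R^N}Q=0$: there $J^\infty_\eps$ has no nontrivial critical points, but $\|bv_n\|_{p'}\to0$ then forces $\|v_n\|_{p'}^{p'}=J_\eps'(v_n)v_n+o(1)\to0$ and $J_\eps(v_n)\to0\neq c_\eps$, so the case $v=0$ cannot occur. With these repairs your argument is complete and matches the intended proof.
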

In view of the preceding results, we introduce the following terminology.
If $v\in L^{p'}(\R^N)\backslash\{0\}$ is a critical point for $J_\eps$ at the
mountain pass level, i.e.  
$J_\eps'(v)=0$ and $J_\eps(v)=c_\eps$,
we call the function $u$ given by
\begin{equation}\label{eqn:u_transf_dual}
u(x)=k^{\frac2{p-2}}\mR\left(Q_\eps^\frac1pv\right)(kx), \quad x\in\R^N,
\end{equation}
where $k=\eps^{-1}$,
a \emph{dual ground state} of \eqref{eqn:sp}. More generally,
if $v$ is a nontrivial critical point of $J_\eps$, the function $u$ obtained from $v$
by \eqref{eqn:u_transf_dual} will be called a \emph{dual bound state} of \eqref{eqn:sp}.

\subsection{Representation lemma and Palais-Smale condition}
We now take a closer look at the Palais-Smale sequences of the functional $J_\eps$ and first
prove a representation lemma in the case where the coefficient $Q$ is a positive constant. 
A crucial ingredient related to the nonlocal quadratic part of the energy functional is 
the nonvanishing theorem proved in \cite[Sect.~3]{evequoz-weth-dual}.

For simplicity, and since the next result is independent of $\eps$,
we drop the subscript $\eps$.
\begin{lemma}\label{lem:splitting}
Suppose $Q\equiv Q(0)>0$ on $\R^N$.
Consider for some $d>0$ a (PS)$_d$-sequence $(v_n)_n\subset L^{p'}(\R^N)$ for $J$. 
Then there is an integer $m\geq 1$, 
critical points $w^{(1)}, \ldots, w^{(m)}$ of $J$ 
and sequences $(x_n^{(1)})_n, \ldots, (x_n^{(m)})_n\subset \R^N$ 
such that (up to a subsequence)
\begin{equation}
\left\{\begin{aligned}
 &\Bigl\| v_n-\sum_{j=1}^m w^{(j)}(\cdot+x_n^{(j)})\Bigr\|_{p'}\to 0, \quad\text{ as }n\to\infty\\
 &|x_n^{(i)}-x_n^{(j)}|\to\infty\quad\text{ as }n\to\infty, \text{ if }i\neq j,\\
 &\sum_{j=1}^m J(w^{(j)})=d.
\end{aligned}
\right.
\end{equation}
\end{lemma}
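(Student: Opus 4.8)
The plan is to run a standard concentration-compactness / profile-decomposition argument adapted to the dual functional, using the nonvanishing theorem of \cite{evequoz-weth-dual} as the replacement for the usual Lions lemma. First I would recall from Lemma~\ref{lem:PS_sequences} that $(v_n)_n$ is bounded in $L^{p'}(\R^N)$ and that $v_n\weakto w^{(1)}$ for some critical point $w^{(1)}$ of $J$. Set $z_n^{(1)}:=v_n-w^{(1)}$. The first task is to show that $(z_n^{(1)})_n$ is again a (PS)-sequence for $J$, with $J(z_n^{(1)})\to d-J(w^{(1)})$ and $J'(z_n^{(1)})\to 0$; this is the Brezis--Lieb type splitting, and it requires the Brezis--Lieb lemma in $L^{p'}$ for the local term together with an analogous additivity statement for the nonlocal quadratic form $v\mapsto\int Q(0)^{1/p}v\,\mR(Q(0)^{1/p}v)$. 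The latter follows because $v_n\weakto w^{(1)}$ in $L^{p'}$, $\mR$ is a bounded linear operator $L^{p'}\to L^p$, so $\mR(Q(0)^{1/p}v_n)\weakto \mR(Q(0)^{1/p}w^{(1)})$ in $L^p$, and the cross terms in the expansion of the bilinear form vanish in the limit by weak-strong pairing of the duality $\langle L^{p'},L^p\rangle$. The same pairing argument handles $J'$.

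Next I would dichotomize. If $z_n^{(1)}\to 0$ strongly in $L^{p'}$, we are done with $m=1$. Otherwise $\liminf_n\|z_n^{(1)}\|_{p'}>0$; since $J'(z_n^{(1)})z_n^{(1)}\to 0$ this forces $\liminf_n\int Q(0)^{1/p}z_n^{(1)}\,\mR(Q(0)^{1/p}z_n^{(1)})\,dx>0$. Here is where the nonvanishing theorem of \cite[Sect.~3]{evequoz-weth-dual} enters: a nonzero lower bound on this nonlocal quadratic form, for a bounded sequence, rules out the ``vanishing'' alternative and produces a sequence $(x_n^{(2)})_n\subset\R^N$ and $R>0$ with $\liminf_n\int_{B_R(x_n^{(2)})}|z_n^{(1)}|^{p'}\,dx>0$. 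Moreover $|x_n^{(2)}|\to\infty$: otherwise $(x_n^{(2)})_n$ stays bounded, so by the local strong convergence in Lemma~\ref{lem:PS_sequences} (applied to the (PS)-sequence $z_n^{(1)}$, which converges weakly to $0$) we would get $\int_{B_R(x_n^{(2)})}|z_n^{(1)}|^{p'}\to 0$, a contradiction. Now translate: $z_n^{(1)}(\cdot+x_n^{(2)})\weakto w^{(2)}$ along a subsequence, and by the translation invariance of $J$ (valid since $Q\equiv Q(0)$ is constant) $w^{(2)}$ is a critical point of $J$ and $w^{(2)}\neq 0$ because the mass in $B_R$ is captured. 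Set $z_n^{(2)}:=z_n^{(1)}-w^{(2)}(\cdot-x_n^{(2)})$ and iterate.

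Finally I would prove termination. At each stage the energy drops by $J(w^{(j)})$, and every nonzero critical point of $J$ satisfies $J(w^{(j)})=(\tfrac1{p'}-\tfrac12)\|w^{(j)}\|_{p'}^{p'}\geq c>0$ (the uniform lower bound $c_\eps>0$ from the mountain pass geometry, recalled before Lemma~\ref{lem:PS_sequences}). Since $\sum_j J(w^{(j)})$ is bounded above by $d$ and the running remainder $J(z_n^{(m)})\to d-\sum_{j=1}^m J(w^{(j)})\geq 0$ stays nonnegative, the process stops after finitely many steps $m\leq d/c$, at which point $z_n^{(m)}\to 0$ in $L^{p'}$; this yields the first and third displayed relations. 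The divergence $|x_n^{(i)}-x_n^{(j)}|\to\infty$ for $i\neq j$ follows because if two centers stayed at bounded distance, the corresponding bumps would overlap and the later weak limit would have absorbed part of the earlier profile, contradicting that $z_n^{(j)}(\cdot+x_n^{(j)})\weakto w^{(j)}$ was taken after subtracting all previous bumps — more precisely one shows inductively $z_n^{(j-1)}(\cdot+x_n^{(i)})\weakto 0$ for $i<j$, which is incompatible with nonvanishing mass near $x_n^{(j)}$ unless $|x_n^{(j)}-x_n^{(i)}|\to\infty$. The main obstacle is the second step: establishing the exact additivity (no leftover cross terms) of the indefinite nonlocal quadratic form under weak convergence and under the bump-subtraction, since $\mR$ is not positive definite and one cannot rely on monotonicity; the remedy is to expand the bilinear form explicitly and kill each cross term by the weak--strong duality pairing between $L^{p'}$ and $L^p$ together with the fact that translated profiles converge weakly to $0$ after all earlier subtractions.
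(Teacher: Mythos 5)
Your overall strategy coincides with the paper's: the nonvanishing theorem of \cite{evequoz-weth-dual} as the substitute for a Lions-type lemma, a Brezis--Lieb splitting of the local term and a weak-convergence splitting of the nonlocal quadratic form, iteration, and termination via the uniform lower bound $J(w)\geq c>0$ for nontrivial critical points. The only structural difference (you take the weak limit of $v_n$ itself first, possibly obtaining a zero profile to be discarded, whereas the paper translates by the nonvanishing centers before extracting the first limit, so that $w^{(1)}\neq 0$ automatically) is harmless.

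The genuine gap is the sentence ``the same pairing argument handles $J'$.'' Recall that $J'(v)$ is represented by the $L^p$-function $|v|^{p'-2}v-Q(0)^{1/p}\mR(Q(0)^{1/p}v)$, so the (PS) property for the remainder means \emph{norm} convergence $\|J'(z_n^{(1)})\|_p\to 0$; you need this, not just $J'(z_n^{(1)})\varphi\to 0$ for each fixed $\varphi$, in order to apply Lemma~\ref{lem:PS_sequences} (and then the nonvanishing theorem) to the translated remainder at the next stage of the iteration. For this the nonlocal part causes no difficulty at all, since it is linear in $v$ and splits exactly; the cross-term/duality discussion you devote to it is aimed at the wrong term. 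The real issue is the nonlinear local term: one must prove
$\bigl\|\,|v_n|^{p'-2}v_n-|z_n^{(1)}|^{p'-2}z_n^{(1)}-|w^{(1)}|^{p'-2}w^{(1)}\bigr\|_p\to 0$,
and weak--strong pairing only yields convergence of $J'(z_n^{(1)})$ tested against fixed functions, i.e.\ weak convergence of the derivatives, which is not enough. The paper closes exactly this point by combining the local strong convergence $1_{B_r}v_n^{(1)}\to 1_{B_r}w^{(1)}$ in $L^{p'}(\R^N)$ from Lemma~\ref{lem:PS_sequences} with the elementary inequality $\bigl||a|^{q-1}a-|b|^{q-1}b\bigr|\leq 2^{1-q}|a-b|^q$ for $1<q<2$, which bounds the contribution outside $B_r$ by $2^{(2-p')p}\int_{\R^N\backslash B_r}|w^{(1)}|^{p'}\,dx$ uniformly in $n$; you have all the ingredients available, but as written this step would fail. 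A smaller imprecision of the same nature: in your argument for $|x_n^{(j)}-x_n^{(i)}|\to\infty$ you invoke only $z_n^{(j-1)}(\cdot+x_n^{(i)})\weakto 0$, and weak convergence alone is not incompatible with persistent local mass; you need the local strong convergence of Lemma~\ref{lem:PS_sequences} in the frame $x_n^{(i)}$ (as you correctly used when showing $|x_n^{(2)}|\to\infty$), which again presupposes that the remainder is a genuine (PS)-sequence.
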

\begin{proof}
Since $(v_n)_n$ is a (PS)$_d$-sequence for $J$, it is bounded and there holds
$$
\lim_{n\to\infty}\int_{\R^N}Q^\frac1pv_n\mR\left(Q^\frac1pv_n\right)\, dx
=\frac{2p'}{2-p'}\lim_{n\to\infty}\left[J(v_n)-\frac1{p'}J'(v_n)v_n\right]=\frac{2p'd}{2-p'}>0.
$$
By the nonvanishing theorem~\cite[Theorem 3.1]{evequoz-weth-dual}, there are $R, \zeta>0$ and 
a sequence $(x_n^{(1)})_n$ such that, up to a subsequence,
$$
\int_{B_R(x_n^{(1)})}|v_n|^{p'}\, dx\geq \zeta >0\quad\text{for all }n.
$$
Replacing $(v_n)_n$ by the corresponding subsequence 
and setting $v_n^{(1)}=v_n(\cdot-x_n^{(1)})$, we find that $(v_n^{(1)})_n$ is also a (PS)$_d$-sequence
for $J$, since this functional is invariant under translations. By Lemma~\ref{lem:PS_sequences}, 
going to a further subsequence, we may assume
$v_n^{(1)}\weakto w^{(1)}$ weakly, $1_{B_R}v_n^{(1)}\to 1_{B_R}w_n^{(1)}$ strongly 
in $L^{p'}(\R^N)$, and $J(w^{(1)})\leq \lim\limits_{n\to\infty}J(v_n^{(1)})=d$. 
These last properties and the definition of $v_n^{(1)}$
imply that $w^{(1)}$ is a nontrivial critical point of $J$.

If $J(w^{(1)})=d$, we obtain
\begin{align*}
\left(\frac1{p'}-\frac12\right)\|w^{(1)}\|_{p'}^{p'}&=J(w^{(1)})-\frac12J'(w^{(1)})w^{(1)}\\
&=d=\lim_{n\to\infty}\left[J(v_n)-\frac12J'(v_n)v_n\right]
=\left(\frac1{p'}-\frac12\right)\lim_{n\to\infty}\|v_n\|_{p'}^{p'},
\end{align*}
i.e., $v_n^{(1)}\to w^{(1)}$ strongly in $L^{p'}(\R^N)$, and the lemma is proved.

Otherwise, $J(w^{(1)})<d$ and we set $v_n^{(2)}=v_n^{(1)}-w^{(1)}$.
The weak convergence $v_n^{(1)}\weakto w^{(1)}$ then implies
$$
\int_{\R^N}Q^\frac1pv_n^{(2)}\mR\left(Q^\frac1p v_n^{(2)}\right)\, dx 
= \int_{\R^N}Q^\frac1pv_n^{(1)}\mR\left(Q^\frac1p v_n^{(1)}\right)\, dx 
- \int_{\R^N}Q^\frac1pw^{(1)}\mR\left(Q^\frac1p w^{(1)}\right)\, dx +o(1),
$$
as $n\to\infty$. Moreover, by the Br\'ezis-Lieb Lemma \cite{brezis-lieb83},
$$
\int_{\R^N}|v_n^{(2)}|^{p'}\, dx =\int_{\R^N}|v_n^{(1)}|^{p'}\, dx 
-\int_{\R^N}|w^{(1)}|^{p'}\, dx+o(1),\quad \text{ as }n\to\infty.
$$
These properties and the translation invariance of $J$ together give
$$
J(v_n^{(2)})=J(v_n^{(1)})-J(w^{(1)})+o(1)=d-J(w^{(1)})+o(1), \quad\text{as }n\to\infty.
$$
Since by Lemma~\ref{lem:PS_sequences}, $1_{B_r}v_n^{(1)}\to 1_{B_r}w^{(1)}$ 
strongly in $L^{p'}(\R^N)$ for all $r>0$,
we find
$$
1_{B_r}|v_n^{(2)}|^{p'-2}v_n^{(2)}-1_{B_r}|v_n^{(1)}|^{p'-2}v_n^{(1)} 
+ 1_{B_r}|w^{(1)}|^{p'-2}w^{(1)}\to 0\quad\text{ in }L^p(\R^N), \quad\text{as }n\to\infty.
$$
Furthermore, since
$\bigl| |a|^{q-1}a-|b|^{q-1}b\bigr|\leq 2^{1-q}|a-b|^q$ for all $a, b\in\R$
and $1<q<2$, it follows that
$$
\int_{\R^N\backslash B_r}\left| |v_n^{(2)}|^{p'-2}v_n^{(2)} - |v_n^{(1)}|^{p'-2}v_n^{(1)} \right|^p\, dx
\leq 2^{(2-p')p}\int_{\R^N\backslash B_r}|w^{(1)}|^{p'}\, dx\to 0,
\quad\text{ as }r\to\infty,
$$
uniformly in $n$. Combining these two facts, we arrive at the strong convergence
$$
|v_n^{(2)}|^{p'-2}v_n^{(2)}-|v_n^{(1)}|^{p'-2}v_n^{(1)} + |w^{(1)}|^{p'-2}w^{(1)}\to 0
\quad\text{ in } L^p(\R^N), \quad\text{as }n\to\infty,
$$
and therefore,
$$
J'(v_n^{(2)})=J'(v_n^{(1)})-J'(w^{(1)})+o(1)=o(1), \quad\text{as }n\to\infty.
$$
We conclude that $(v_n^{(2)})_n$ is a PS-sequence for $J$ at level $d-J(w^{(1)})>0$. 
Thus, the nonvanishing theorem
give the existence of $R_1, \zeta_1>0$ and of a sequence $(y_n)_n\subset\R^N$
such that, going to a subsequence,
$$
\int_{B_{R_1}}|v_n^{(2)}|^{p'}\, dx\geq \zeta_1>0\quad\text{ for all }n.
$$
By Lemma~\ref{lem:PS_sequences}, there is a critical point $w^{(2)}$ of $J$ such that 
(taking a further subsequence) $v_n^{(2)}(\cdot-y_n)\weakto w^{(2)}$ weakly
and $1_Bv_n^{(2)}(\cdot-y_n)\to 1_Bw^{(2)}$ strongly in $L^{p'}(\R^N)$, for all bounded 
and measurable sets $B\subset\R^N$. In particular,
$w^{(2)}\neq 0$ and since $v_n^{(2)}\weakto 0$ we see that $|y_n|\to\infty$, as $n\to\infty$.

Setting $x_n^{(2)}=x_n^{(1)}+y_n$, we obtain $|x_n^{(2)}-x_n^{(1)}|\to\infty$ as $n\to\infty$, and
$$
v_n-\left(w^{(1)}(\cdot+x_n^{(1)})+w^{(2)}(\cdot+x_n^{(2)})\right)=v_n^{(2)}(\cdot-y_n+x_n^{(2)})
-w^{(2)}(\cdot+x_n^{(2)})\weakto 0\quad\text{ in }L^{p'}(\R^N),
$$
In addition, the same arguments as before show that 
$J(w^{(2)})\leq \liminf\limits_{n\to\infty}J(v_n^{(2)})=d-J(w^{(1)})$ 
with equality if and only if $v_n^{(2)}(\cdot-y_n)\to w^{(2)}$ strongly in $L^{p'}(\R^N)$. 
If the inequality is strict, we can iterate the procedure.
Since for every nontrivial critical point $w$ of $J$ we have $J(w)\geq c=\inf\limits_{\cN}J>0$ 
the iteration has to stop after finitely many steps, and we obtain the desired result.
\end{proof}

We now turn to investigate the Palais-Smale condition for $J_\eps$ 
and first note that if $Q(x)\to 0$ as $|x|\to\infty$,
it holds at every level, i.e., every Palais-Smale sequence has 
a convergent subsequence (see \cite[Sect.~5]{evequoz-weth-dual}).
To treat the case where
\begin{equation}\label{eqn:q_inf}
Q_\infty:=\limsup\limits_{|x|\to\infty}Q(x)>0,
\end{equation}
we consider the energy functional $J_\infty$: $L^{p'}(\R^N)$ $\to$ $\R$
given by
$$
J_\infty(v)=\frac1{p'}\int_{\R^N}|v|^{p'}\, dx 
-\frac12\int_{\R^N}Q_\infty^\frac1pv\mR(Q_\infty^\frac1pv)\, dx, 
\quad v\in L^{p'}(\R^N).
$$
The corresponding Nehari manifold
$$
\cN_\infty:=\{v\in L^{p'}(\R^N)\backslash\{0\}\ :\ J'_\infty(v)v=0\},
$$
has the same structure as $\cN_\eps$ and, since $Q_\infty$ is constant, Lemma~\ref{lem:nehari_mp} 
implies that $c_\infty:=\inf\limits_{\cN_\infty}J_\infty$ is attained and coincides with
the least energy level for nontrivial critical points of $J_\infty$.
As the last result in this section shows, the Palais-Smale condition holds for $J_\eps$ on the 
Nehari manifold $\cN_\eps$ at every energy level strictly below $c_\infty$.
The proof is inspired by the papers of Cingolani and Lazzo \cite{cingolani-lazzo97,cingolani-lazzo00}.
A new feature here is the fact that the quadratic part of the functional is nonlocal and this induces 
a nonzero interaction between functions with disjoint supports. In order to handle this, 
we first prove an estimate on this nonlocal interaction in terms of the distance between the 
supports of the two functions. It is based on a decomposition of the fundamental solution already 
introduced in \cite[Sect.~3]{evequoz-weth-dual}. Having obtained the estimate, we establish
the Palais-Smale condition for $J_\eps$ on $\cN_\eps$ below the level $c_\infty$.
\begin{lemma}\label{lem:interaction}
There exists a constant $C=C(N,p)>0$ such that 
for any $R>0$, $r\geq 1$ and $u, v\in L^{p'}(\R^N)$ 
with $\text{supp}(u)\subset B_R$
and $\text{supp}(v)\subset\R^N\backslash B_{R+r}$,
\begin{equation*}
\left|\int_{\R^N}u\mR v\, dx\right|\leq C r^{-\lambda_p}\|u\|_{p'}\|v\|_{p'}, 
\quad\text{ where }\ \lambda_p=\frac{N-1}{2}-\frac{N+1}{p}.
\end{equation*}
\end{lemma}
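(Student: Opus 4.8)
The plan is to reduce the bilinear estimate to an operator-norm bound for the ``far field'' part of $\mR$, and then to obtain that bound by decomposing the Helmholtz kernel dyadically and estimating each piece via a scale-localized resolvent estimate. Concretely, I would first exploit the support hypotheses: since $\supp(u)\subset B_R$ and $\supp(v)\subset\R^N\setminus B_{R+r}$, in the double integral representing $\int_{\R^N}u\,\mR v\,dx$ only the values $\Re\Phi(z)$ with $|z|\ge r$ enter, because $|x-y|\ge|y|-|x|>r$ on the support of the integrand. Hence $\int_{\R^N}u\,\mR v\,dx=\int_{\R^N}u\,\mR_r v\,dx$, where $\mR_r$ is the convolution operator with truncated kernel $1_{\{|z|\ge r\}}\Re\Phi$, and Hölder's inequality (with $\tfrac1{p'}+\tfrac1p=1$) gives $\bigl|\int_{\R^N}u\,\mR v\,dx\bigr|\le\norm{\mR_r}_{L^{p'}\to L^p}\,\norm{u}_{p'}\,\norm{v}_{p'}$. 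Thus the lemma reduces to showing $\norm{\mR_r}_{L^{p'}\to L^p}\le C\,r^{-\lambda_p}$ for every $r\ge1$.

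For this I would use the decomposition of $\Phi$ from \cite[Sect.~3]{evequoz-weth-dual}, based on the asymptotics of the Hankel function $H^{(1)}_{(N-2)/2}$: one can write $\Re\Phi=\Re\Phi_0+\Re\Phi_\infty$ with $\supp\Phi_0\subset B_1$ (carrying the local singularity of $\Phi$) and $\Phi_\infty(z)=|z|^{-(N-1)/2}e^{i|z|}b(z)$, where $b$ is smooth and bounded with $|\partial^\gamma b(z)|\le C_\gamma(1+|z|)^{-|\gamma|}$. Since $r\ge1$ kills the $\Phi_0$ part ($1_{\{|z|\ge r\}}\Re\Phi_0=0$), it remains to bound convolution by $1_{\{|z|\ge r\}}\Re\Phi_\infty$. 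With a smooth partition of unity $1=\sum_{j\ge0}\chi_j$ of $\{|z|\ge r\}$ such that $\supp\chi_j\subset\{2^{j-1}r\le|z|\le2^{j+1}r\}$ and $|\partial^\gamma\chi_j|\le C_\gamma(2^jr)^{-|\gamma|}$, I would split that operator as $\sum_{j\ge0}A_j$, with $A_j$ the convolution by $\chi_j\Re\Phi_\infty$, and invoke the scale-localized estimate: there is $C=C(N,p)$ such that, for every $\rho\ge1$ and every smooth $a$ supported in $\{\rho/2\le|z|\le2\rho\}$ with $|\partial^\gamma a|\le\rho^{-|\gamma|}$, the convolution by $a\,\Re\Phi_\infty$ maps $L^{p'}$ to $L^p$ with norm at most $C\rho^{-\lambda_p}$. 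Applying this with $a=\chi_j b$ at scale $\rho_j=2^jr$ and using that $\lambda_p>0$ exactly when $p>\tfrac{2(N+1)}{N-1}$, the geometric summation $\norm{\mR_r}_{L^{p'}\to L^p}\le\sum_{j\ge0}\norm{A_j}_{L^{p'}\to L^p}\le C\sum_{j\ge0}(2^jr)^{-\lambda_p}=C'r^{-\lambda_p}$ finishes the proof.

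To prove the scale-localized estimate, observe that $\Re(a\,\Phi_\infty)$ is, up to complex conjugation, of the form $g_\rho(z)=c(z)|z|^{-(N-1)/2}e^{i|z|}$ with $c$ smooth, $|\partial^\gamma c|\le C_\gamma\rho^{-|\gamma|}$, $\supp c\subset\{\rho/2\le|z|\le2\rho\}$, so it suffices to bound $T_\rho f=g_\rho\ast f$. I would interpolate the trivial bound $\norm{T_\rho}_{L^1\to L^\infty}=\norm{g_\rho}_\infty\le C\rho^{-(N-1)/2}$ with the $L^2$ bound $\norm{T_\rho}_{L^2\to L^2}=\norm{\widehat{g_\rho}}_\infty\le C\rho$. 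The latter follows from a stationary/non-stationary phase analysis: after $z=\rho w$ the phase becomes $\rho\,(|w|-w\cdot\xi)$, which has a critical point only if $|\xi|=1$ and is nondegenerate in the $N-1$ directions orthogonal to the radial one, so $|\widehat{g_\rho}(\xi)|\le C\rho\,(1+\rho\,|\,|\xi|-1\,|)^{-M}$ for every $M$. Riesz--Thorin interpolation at the parameter $\theta=2/p$ then gives $\norm{T_\rho}_{L^{p'}\to L^p}\le C\,\rho^{2/p}\,\rho^{-\frac{N-1}{2}(1-2/p)}=C\,\rho^{\frac{N+1}{p}-\frac{N-1}{2}}=C\,\rho^{-\lambda_p}$, as required.

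The step I expect to be the main obstacle is the scale-localized $L^2\to L^2$ bound $\norm{T_\rho}_{L^2\to L^2}\le C\rho$: this is exactly where the oscillation of the Helmholtz kernel, carried by the factor $e^{i|z|}$ in $\Phi_\infty$, is indispensable --- without it the $L^2$ norm of $T_\rho$ would be of order $\rho^{(N+1)/2}$ and interpolation would produce no decay for $p>\tfrac{2(N+1)}{N-1}$. Equivalently, this building block is a scale-localized form of the resolvent estimates of Kenig, Ruiz and Sogge \cite{KRS87}; the delicate point is to keep the constant uniform in $\rho$ (hence in $j$, and independent of $R$), while the dyadic summation is then routine since $\lambda_p>0$.
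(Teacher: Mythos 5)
Your argument is correct, but it takes a genuinely different route from the paper. The paper decomposes the kernel on the Fourier side, writing $\Phi=\Phi_1+\Phi_2$ with $\Phi_1=(2\pi)^{N/2}\psi\ast\Phi$ frequency-localized near the unit sphere: the error term $\Phi_2$, which decays like $|x|^{-N}$, and the near part $1_{B_{r/2}}\Phi_1$ (after a mollification trick $\tilde u=\phi\ast u$) are handled by pointwise bounds, support separation and Young's inequality, while the far part $1_{M_{r/2}}\Phi_1$ is estimated by quoting the truncated resolvent estimate \cite[Proposition 3.3]{evequoz-weth-dual}, which already carries the $r^{-\lambda_p}$ decay. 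You instead decompose in physical space via the Hankel asymptotics ($\Phi_0$ compactly supported, $\Phi_\infty=|z|^{-(N-1)/2}e^{i|z|}b(z)$), observe that the support separation lets you replace the kernel by any kernel agreeing with $\Re\Phi$ on $\{|z|\ge r\}$, and then re-derive the key decay estimate from scratch by dyadic decomposition plus interpolation of the trivial $L^1\to L^\infty$ bound with a stationary-phase $L^2\to L^2$ bound of size $\rho$ on each annulus; this is essentially a self-contained proof of the cited Proposition 3.3, and your exponent bookkeeping ($\theta=2/p$, summability because $\lambda_p>0$ for $p>\tfrac{2(N+1)}{N-1}$) is right. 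What the paper's route buys is brevity and the avoidance of any oscillatory-integral work inside this lemma (at the cost of the extra $\Phi_2$ and mollification steps forced by the frequency cutoff); what your route buys is a cleaner conceptual reduction to a single scale-localized oscillatory estimate and independence from the external proposition. Two small points to tidy up: the identity $\int u\,\mR v=\int u\,\mR_r v$ should be justified on Schwartz (or compactly supported $L^{p'}$) functions and extended by density, exactly as the paper does; and your smooth partition $\sum_j\chi_j$ of $\{|z|\ge r\}$ necessarily spills into $\{r/2\le|z|<r\}$, so $\sum_j A_j$ is not literally $\mR_r$ --- but since $u(x)v(y)\neq0$ forces $|x-y|\ge r$, any kernel agreeing with $\Re\Phi$ on $\{|z|\ge r\}$ gives the same bilinear form, so the bound on $\sum_j A_j$ suffices.
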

\begin{proof}
We prove the lemma for the nonlocal term $\int_{\R^N}v\cR u\, dx$, 
where $\cR$ denotes the resolvent operator given (for Schwartz functions) 
by the convolution with the kernel $\Phi$ in \eqref{eqn:phi} 
(see \cite[Sect.~2]{evequoz-weth-dual} for more details). 
Since $\mR$ is the real part of $\cR$ and since $u, v$ are real-valued, 
this will imply the desired result.
By density, it suffices to prove the estimate for Schwartz functions.
Let $M_{R+r}:= \R^N \backslash B_{R+r}$ and 
let $u, v\in\cS(\R^N)$ be such that $\text{supp}(u)\subset B_R$
and $\text{supp}(v)\subset M_{R+r}$. The symmetry of the operator $\cR$ and
H\"older's inequality give
\begin{equation}\label{eqn:estimR_hoelder}
\left|\int_{\R^N}u\cR v\, dx\right|=\left|\int_{\R^N}v\cR u\, dx\right|
\leq \|v\|_{p'}\|\Phi\ast u\|_{L^p(M_{R+r})}
\end{equation}
and it remains to estimate the second factor on the right-hand side.
For this we decompose $\Phi$ as follows.
Fix $\psi \in \cS(\R^N)$ such that $\wh{\psi}\in\cC^\infty_c(\R^N)$ 
is radial, $0\leq \wh{\psi}\leq 1$, $\wh{\psi}(\xi)=1$ for $| |\xi|-1|\leq\frac16$ 
and $\wh{\psi}(\xi)=0$ for $| |\xi|-1|\geq\frac14$. Writing
$\Phi= \Phi_1 + \Phi_2$
with 
$$
\Phi_1:= (2\pi)^{\frac{N}2}(\psi * \Phi), \qquad \Phi_2 = \Phi-\Phi_1,
$$
we recall the following estimates obtained in \cite{evequoz-weth-dual,e-helm-2d}:
\begin{align}
&|\Phi_1(x)| \le C_0 (1+|x|)^{\frac{1-N}{2}} \qquad
  \text{for }x \in \R^N  \label{eq:15} \\
\text{ and }\quad
&|\Phi_2(x)|\leq C_0|x|^{-N} \qquad\text{for }x\neq 0.  \label{eq:17}
\end{align}
Since the support of $u$ in contained in $B_R$, we find
\begin{align*}
\|\Phi_2 \ast u\|_{L^p(M_{R+r})} 
&\leq \left[\int_{|x| \geq R+r} \Bigl(\int_{|y| \leq R} |\Phi_2(x-y)| |u(y)|\,dy\Bigr)^p dx \right]^{\frac1p} \\
&\leq \left[\int_{\R^N} \Bigl(\int_{|x-y| \geq r} |\Phi_2(x-y)| |u(y)|\,dy\Bigr)^p dx\right]^{\frac1p} \\
&= \|(1_{M_r} |\Phi_2|) \ast |u|\ \|_p 
 \leq \|1_{M_r} \Phi_2\|_{\frac{p}2} \|u\|_{p'}.
\end{align*}
Moreover, \eqref{eq:17} gives
$$
\|1_{M_r} \Phi_2\|_{\frac{p}2} 
\leq C_0\left(\omega_N\int_r^\infty s^{N-1-\frac{Np}2}\,ds\right)^{\frac2p} 
\leq C r^{-\frac{N(p-2)}{p}}\leq C r^{-\lambda_p},
$$ 
since $r\geq 1$,
and therefore 
\begin{equation}\label{eqn:Phi2}
\|\Phi_2 \ast u\|_{L^p(M_{R+r})}\leq C r^{-\lambda_p}\|u\|_{p'}.
\end{equation}
To prove the estimate for $\Phi_1$, let us fix a radial function $\phi \in \cS(\R^N)$ such that
$\wh{\phi}\in\cC^\infty_c(\R^N)$ is radial, $0\leq \wh{\phi}\leq 1$,
$\wh{\phi}(\xi)=1$ for $| |\xi|-1|\leq \frac14$ and $\wh{\phi}(\xi)=0$
for $| |\xi|-1|\geq\frac12$. Moreover, let $\tilde u:= \phi \ast u \in\cS(\R^N)$. 
We then have $\Phi_1 \ast u = (2\pi)^{-\frac{N}2}(\Phi_1 \ast \tilde u)$, since
$\wh{\Phi_1}\wh{\phi}= (2\pi)^{\frac{N}2}\wh{\Phi_1}$ by construction. We now write 
$$
\Phi_1 \ast \tilde u = [1_{B_{\frac{r}2}} \Phi_1] \ast \tilde u 
+ [1_{M_{\frac{r}2}} \Phi_1] \ast \tilde u
$$
and let $g_r:= [1_{B_{\frac{r}2}} \Phi_1] \ast \phi$. 
Since $\text{supp}(u)\subset B_R$, we find as above
\begin{align*}
\|[1_{B_{\frac{r}2}} \Phi_1] \ast \tilde u\|_{L^p(M_{R+r})}
= \|g_r \ast  u\|_{L^p(M_{R+r})}
\leq \|(1_{M_r} |g_r|)\ast |u|\ \|_p \le 
\|1_{M_r} g_r\|_{\frac{p}{2}} \|u\|_{p'}.
\end{align*}
Using \eqref{eq:15} and the fact that $\phi \in \cS(\R^N)$, we may estimate 
\begin{align*}
\|1_{M_r} g_r\|_{\frac{p}2}^{\frac{p}2} 
&\leq C_0^{\frac{p}2} \int_{|x| \geq r} \Bigl(\int_{|y| \leq \frac{r}2}|\phi(x-y)|\,dy\Bigr)^{\frac{p}2}\,dx \\
&\leq C \int_{|x| \geq r} \Bigl(\int_{|y| \leq \frac{r}2}|x-y|^{-m}\,dy\Bigr)^{\frac{p}2} dx  
\leq C |B_{\frac{r}2}|^{\frac{p}2} \int_{|x| \geq r}\Bigl(|x|-\frac{r}2\Bigr)^{- \frac{m p}2}dx\\
&= C r^{\frac{(N-m)p}2+N} \int_{|z| \geq 1}\left(|z|-\frac{1}{2}\right)^{-\frac{mp}2}\,dz 
= C r^{\frac{(N-m)p}2+N},  
\end{align*}
where $C$ is independent of $r$ and where $m$ may be fixed so large that
$\frac{(m-N)p}2-N\geq \lambda_p$. As a consequence of
of \cite[Proposition 3.3]{evequoz-weth-dual} we have moreover
$$
\|[1_{M_{\frac{r}2}}\Phi_1] \ast \tilde u\|_{L^p(M_{R+r})}
\leq \|[1_{M_{\frac{r}2}}\Phi_1] \ast \tilde u\|_p  
\leq C r^{-\lambda_p} \|\tilde u\|_{p'}
\leq C r^{-\lambda_p} \|u\|_{p'}
$$
and we conclude that
\begin{equation}\label{eqn:Phi1}
\|\Phi_1 \ast u\|_{L^p(M_{R+r})}\leq C r^{-\lambda_p}\|u\|_{p'}.
\end{equation}
Combining \eqref{eqn:estimR_hoelder}, \eqref{eqn:Phi2} and \eqref{eqn:Phi1} yields the claim.
\end{proof}
\begin{lemma}\label{lem:PS_J_eps}
Let $\eps>0$ and assume $Q_\infty>0$ and $c_\eps<c_\infty$. 
Then $J_\eps$ satisfies the Palais-Smale condition on $\cN_\eps$ at every level
below $c_\infty$, 
i.e., every sequence $(v_n)_n\subset \cN_\eps$ such that
$J_\eps(v_n)\to d<c_\infty$ and $(J_\eps|_{\cN_\eps})'(v_n)\to 0$ as $n\to\infty$ 
has a convergent subsequence.
\end{lemma}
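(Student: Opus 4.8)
The plan is to establish the Palais--Smale condition by the standard concentration-compactness strategy adapted to the dual framework, using Lemma~\ref{lem:splitting} and Lemma~\ref{lem:interaction} to control the loss of mass at infinity. First I would take a sequence $(v_n)_n\subset\cN_\eps$ with $J_\eps(v_n)\to d<c_\infty$ and $(J_\eps|_{\cN_\eps})'(v_n)\to 0$. A routine argument on the Nehari manifold (using that $\cN_\eps$ is a $C^1$-manifold bounded away from $0$, as recorded after \eqref{eqn:t_v}) shows that $(v_n)_n$ is in fact a (PS)$_d$-sequence for the free functional $J_\eps$ on $L^{p'}(\R^N)$. By Lemma~\ref{lem:PS_sequences}, up to a subsequence $v_n\weakto v$ weakly with $J_\eps'(v)=0$, and $1_B v_n\to 1_B v$ strongly for every bounded measurable $B$. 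The goal is to upgrade this to strong convergence in $L^{p'}(\R^N)$; equivalently, to rule out escape of mass to infinity.

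Next I would argue by contradiction, supposing $v_n\not\to v$ strongly. Setting $z_n=v_n-v$, the Br\'ezis--Lieb lemma and the weak convergence give the usual splitting $\|v_n\|_{p'}^{p'}=\|v\|_{p'}^{p'}+\|z_n\|_{p'}^{p'}+o(1)$ and, for the nonlocal term, $\int Q_\eps^{1/p}v_n\mR(Q_\eps^{1/p}v_n)=\int Q_\eps^{1/p}v\mR(Q_\eps^{1/p}v)+\int Q_\eps^{1/p}z_n\mR(Q_\eps^{1/p}z_n)+o(1)$, so that $J_\eps(z_n)\to d-J_\eps(v)$ and $J_\eps'(z_n)\to 0$. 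Since $z_n\weakto 0$, for every bounded $B$ we have $1_B z_n\to 0$ in $L^{p'}$, so mass concentrates only near infinity. The heart of the proof is to compare the energy of $(z_n)$ with the problem at infinity. Because $Q_\infty=\limsup_{|x|\to\infty}Q(x)$, for any $\eta>0$ there is $\rho>0$ with $Q_\eps(x)\le Q_\infty+\eta$ for $|x|\ge\rho$; splitting $z_n=1_{B_\rho}z_n+1_{\R^N\setminus B_\rho}z_n$ and using the local strong convergence together with Lemma~\ref{lem:interaction} to bound the cross interaction between the two pieces (its support separation growing once one also translates), one shows that $(1_{\R^N\setminus B_\rho}z_n)$ is, up to $o(1)$ and a small-$\eta$ correction, a (PS)-sequence for $J_\infty$ at the level $d-J_\eps(v)$. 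If this level is positive, the nonvanishing theorem and Lemma~\ref{lem:splitting} applied to $J_\infty$ produce at least one nontrivial critical point $w$ of $J_\infty$ with $J_\infty(w)\ge c_\infty$, whence $d-J_\eps(v)\ge c_\infty-o_\eta(1)$; letting $\eta\to0$ and using $J_\eps(v)\ge 0$ (since $v$ is either $0$ or a critical point, so $J_\eps(v)\ge 0$) yields $d\ge c_\infty$, contradicting $d<c_\infty$. If instead $d-J_\eps(v)=0$, then $\|z_n\|_{p'}^{p'}\to 0$ directly from the Nehari identity for $z_n$ (namely $J_\eps(z_n)=(\frac1{p'}-\frac12)\|z_n\|_{p'}^{p'}+o(1)$), again contradicting $v_n\not\to v$.

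The main obstacle is precisely the nonlocal cross term: in the local case one simply throws away $\int_{\R^N\setminus B_\rho}|v_n|^{p'}$ outside a large ball, but here $\int Q_\eps^{1/p}(1_{B_\rho}z_n)\mR(Q_\eps^{1/p}(1_{\R^N\setminus B_{\rho'}}z_n))$ need not vanish. This is exactly what Lemma~\ref{lem:interaction} is designed for: choosing the two supports separated by a gap $r$ (which can be taken large by enlarging $\rho$ and, after the translation $x_n^{(j)}$ supplied by the nonvanishing theorem, using $|x_n^{(j)}|\to\infty$), the interaction is bounded by $Cr^{-\lambda_p}\|1_{B_\rho}z_n\|_{p'}\|z_n\|_{p'}$, which is $o(1)$ as $r\to\infty$ uniformly in $n$; together with $1_{B_\rho}z_n\to0$ strongly this makes the splitting into a ``local part going to zero'' plus an ``$J_\infty$-part'' rigorous. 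A secondary technical point is verifying that the (PS)-for-$J_\eps$ sequence $(z_n)$ translated to follow the escaping mass is genuinely a (PS)-sequence for $J_\infty$ and not merely for $J_\eps$ with the shifted potential $Q_\eps(\cdot+x_n)$; this is handled by the uniform bound $Q_\eps\le Q_\infty+\eta$ on the relevant region plus a standard estimate showing $\|(Q_\eps^{1/p}-Q_\infty^{1/p})1_{\R^N\setminus B_\rho}z_n\|$ is $O(\eta^{1/p})\|z_n\|_{p'}$, after which one lets $\eta\to0$ through a diagonal argument.
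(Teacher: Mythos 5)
Your overall skeleton (passing from the constrained to the free (PS)$_d$-sequence, extracting the weak limit $v$ with $J_\eps'(v)=0$, Br\'ezis--Lieb splitting for $z_n=v_n-v$, and the dichotomy on the level $d-J_\eps(v)$) is fine, but the decisive step has a genuine gap: you claim that the escaping part of $z_n$, truncated outside a large ball and translated, is up to $o(1)$ and an $O(\eta^{1/p})$ correction a Palais--Smale sequence for $J_\infty$, justified by the estimate $\|(Q_\eps^{1/p}-Q_\infty^{1/p})1_{\R^N\setminus B_\rho}z_n\|_{p'}=O(\eta^{1/p})\|z_n\|_{p'}$. Under the hypothesis of the lemma, $Q_\infty$ is only a $\limsup$: outside $B_\rho$ you get the \emph{one-sided} bound $Q_\eps\leq Q_\infty+\eta$, but $Q$ may dip far below $Q_\infty$ (even vanish) on larger and larger regions going to infinity, and the escaping mass of $z_n$ may sit precisely there. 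So $Q_\eps^{1/p}-Q_\infty^{1/p}$ is not small where it matters, the translated tail need not be an approximate (PS)-sequence for $J_\infty$, and the appeal to the nonvanishing theorem and Lemma~\ref{lem:splitting} for $J_\infty$ is not justified. This is exactly the point of the remark following the lemma in the paper: your route is the simplified proof that works under the \emph{stronger} assumption $Q_\infty=\lim_{|x|\to\infty}Q(x)$, not under \eqref{eqn:q_inf} with a mere $\limsup$.

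The paper's proof avoids this by never transferring the (PS) property to $J_\infty$; it only uses the one-sided bound. One first shows an annular smallness property for $(v_n)$, then sets $w_{n_k}=1_{\{|x|\geq 2r\}}v_{n_k}$ and controls $J_\eps'(v_{n_k})w_{n_k}-J_\eps'(w_{n_k})w_{n_k}$ via Lemma~\ref{lem:interaction} (inner ball, gap $r$) plus the small annulus; then one \emph{projects} $\tilde w_k=(Q_\eps/Q_\infty)^{1/p}w_{n_k}$ onto $\cN_\infty$: the bilinear term is exactly preserved, while $\|\tilde w_k\|_{p'}^{p'}\leq(1+\eta/Q_\infty)^{p'-1}\|w_{n_k}\|_{p'}^{p'}$, which yields
$c_\infty\leq J_\infty(t_k^\infty\tilde w_k)\leq (1+o_\eta(1))\,J_\eps(v_{n_k})$ and hence $c_\infty\leq d$ after $k\to\infty$ and $\eta\to 0$, a contradiction. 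If you want to keep your decomposition $z_n=v_n-v$, the fix is to replace the ``tail is (PS) for $J_\infty$'' claim by this Nehari-projection comparison (note that with $z_n$ the cross term is easier than you make it: $1_{B_\rho}z_n\to 0$ strongly, so continuity of $\mR$ alone kills it for fixed $\rho$, and Lemma~\ref{lem:interaction} is not really needed there); as written, however, the argument does not prove the lemma at the stated level of generality.
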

\begin{proof}
First note that by assumption,  $\{v\in\cN_\eps\, :\, J_\eps(v)<c_\infty\}$ is not empty. 
If $d<c_\eps$ there is nothing to prove. 
Let therefore $c_\eps\leq d<c_\infty$ and consider a (PS)$_d$-sequence $(v_n)_n$ for $J_\eps|_{\cN_\eps}$.
Since $\cN_\eps$ is a natural constraint and a $C^1$-manifold, we find that $(v_n)_n$ is a (PS)$_d$-sequence
for the unconstrained functional $J_\eps$. Using Lemma~\ref{lem:PS_sequences}, we obtain that 
(up to a subsequence) $v_n\weakto v$ and $1_{B_R} v_n\to 1_{B_R} v$ in $L^{p'}(\R^N)$ for all $R>0$, where
$v\in L^{p'}(\R^N)$ is a critical point of $J_\eps$ with $J_\eps(v)\leq d$. In order to conclude that 
$v_n\to v$ strongly in $L^{p'}(\R^N)$, it suffices to show that
\begin{equation}\label{eqn:strong_conv}
\forall\; \zeta>0, \quad \exists\; R>0\quad\text{such that}\quad
\int_{|x|>R}|v_n|^{p'}\, dx<\zeta, \quad\forall\; n.
\end{equation}
As a first step, we claim that this holds true in annular regions, in the following sense:
\begin{equation}\label{eqn:annular}
\forall\; \eta>0, \text{ and } \forall\; R>0, \quad\exists\ r>R \quad\text{such that}\quad
\liminf_{n\to\infty}\int_{r<|x|<2r}|v_n|^{p'}\, dx<\eta.
\end{equation}
Suppose not, then we find $\eta_0, R_0>0$ with the property that for every $m>R_0$ there is $n_0=n_0(m)$
such that $\int_{m<|x|<2m}|v_n|^{p'}\, dx\geq \eta_0$ for all $n\geq n_0$. Without loss
of generality, we assume that $n_0(m+1)\geq n_0(m)$ for all $m$. 
Hence, for every $\ell\in\N$ there is $N_0=N_0(\ell)$ such that
$$
\int_{\R^N}|v_n|^{p'}\, dx\geq \sum_{k=0}^{\ell-1}\int_{2^k([R_0]+1)<|x|<2^{k+1}([R_0]+1)}|v_n|^{p'}\, dx
\geq \ell\eta_0,
\quad \forall\; n\geq N_0.
$$
Letting $\ell\to\infty$, we obtain a contradiction to the fact that $(v_n)_n$ is bounded and
this gives \eqref{eqn:annular}.

\bigskip

We now prove \eqref{eqn:strong_conv} by contradiction. Assuming that it does not hold, we find $\zeta_0>0$ 
and a subsequence $(v_{n_k})_k$ such that
\begin{equation}\label{eqn:not_strong}
\int_{|x|>k}|v_{n_k}|^{p'}\, dx\geq\zeta_0, \quad\forall\; k.
\end{equation}
Let $0<\eta<\min\{1,(\frac{\zeta_0}{3C_1})^{p'}\}$ be fixed, where 
$C_1=2C(N,p)\|Q\|_\infty^\frac2p \max\{1,\sup\limits_{k\in\N}\|v_{n_k}\|_{p'}^2\}$, 
the constant $C(N,p)$ being chosen such that Lemma~\ref{lem:interaction} holds and 
$\|\mR v\|_p\leq C(N,p)\|v\|_{p'}$ for all $u\in L^{p'}(\R^N)$. 
By definition of $Q_\infty$ and since $\eps>0$ is fixed,
there is $R(\eta)>0$ such that 
$$
Q_\eps(x)\leq Q_\infty+\eta\quad\text{ for all }|x|\geq R(\eta).
$$
Also, from \eqref{eqn:annular}, we can find $r>\max\{R(\eta), \eta^{-\frac1{\lambda_p}}\}$ 
and a subsequence, still denoted by $(v_{n_k})_k$, such that
$$
\int_{r<|x|<2r}|v_{n_k}|^{p'}\, dx<\eta\quad\text{ for all }k.
$$
Setting $w_{n_k}:=1_{\{|x|\geq 2r\}}v_{n_k}$
we can write for all $k$,
\begin{align*}
&\Bigl|J_\eps'(v_{n_k})w_{n_k}-J_\eps'(w_{n_k})w_{n_k}\Bigr|
=\Bigl|\int_{|x|<r}Q_\eps^\frac1pv_{n_k}\mR(Q_\eps^\frac1pw_{n_k})\, dx
+\int_{r<|x|<2r} Q_\eps^\frac1pv_{n_k}\mR(Q_\eps^\frac1pw_{n_k})\, dx\Bigr|\\
&\leq  C(N,p) r^{-\lambda_p}\|Q\|_\infty^\frac2p\|v_{n_k}\|_{p'}^2
+ C(N,p) \|Q\|_\infty^\frac2p \|v_{n_k}\|_{p'}\left(\int_{r<|x|<2r}|v_{n_k}|^{p'}\, dx\right)^{\frac1{p'}}\\
&\leq C_1 \eta^{\frac1{p'}},
\end{align*}
using Lemma~\ref{lem:interaction}.
Also, by \eqref{eqn:not_strong} and the definition of $w_{n_k}$, there holds
$$
\int_{\R^N}|w_{n_k}|^{p'}\, dx \geq \zeta_0\quad\text{for all }k\geq 2r.
$$
Recalling our choice of $\eta$, we know that $C_1\eta^{\frac1{p'}}<\frac{\zeta_0}{3}$ and
we find some $k_0=k_0(r, \eta, \zeta)\geq 2r$ such that
\begin{equation}\label{eqn:bilinear}
\begin{aligned}
\int_{\R^N}Q_\eps^\frac1pw_{n_k}\mR(Q_\eps^\frac1pw_{n_k})\, dx
&=\int_{\R^N}|w_{n_k}|^{p'}\, dx -J'_\eps(v_{n_k})w_{n_k}+[J'_\eps(v_{n_k})w_{n_k}-J'_\eps(w_{n_k})w_{n_k}]\\
&\geq \int_{\R^N}|w_{n_k}|^{p'}\, dx -|J'_\eps(v_{n_k})w_{n_k}|-C_1\eta^{\frac1{p'}}\\
&\geq \frac{\zeta_0}{2},\quad \text{for all }k\geq k_0,
\end{aligned}
\end{equation}
since $J'_\eps(v_{n_k})w_{n_k}\to 0$ as $k\to\infty$. 
We note also that, since $v_{n_k}\in\cN_\eps$, there holds
\begin{equation}\label{eqn:p_mass}
\int_{\R^N}|w_{n_k}|^{p'}\, dx\leq \int_{\R^N}|v_{n_k}|^{p'}\, dx 
= \left(\frac1{p'}-\frac12\right)^{-1}J_\eps(v_{n_k}).
\end{equation}
For $k\geq k_0$, let now $\tilde{w}_k:=\left(\frac{Q_\eps}{Q_\infty}\right)^{\frac1p}w_{n_k}$ 
and notice that $|\tilde{w}_k|\leq \left(1+\frac{\eta}{Q_\infty}\right)^{\frac1p}|w_{n_k}|$.
In view of \eqref{eqn:bilinear}, there is $t_k^\infty>0$ for which $t_k^\infty\tilde{w}_k\in\cN_\infty$ 
and there holds
\begin{align*}
(t_k^\infty)^{2-p'}&\leq \frac{\left(1+\frac{\eta}{Q_\infty}\right)^{p'-1}\int_{\R^N}|w_{n_k}|^{p'}\, dx}{
	\int_{\R^N}Q_\eps^\frac1p w_{n_k}\mR(Q_\eps^\frac1p w_{n_k})\, dx}\\
&\leq \left(1+\frac{\eta}{Q_\infty}\right)^{p'-1}
\left(1+\frac{|J'_\eps(v_{n_k})w_{n_k}|+C_1\eta^{\frac1{p'}}}{
	\int_{\R^N}Q_\eps^\frac1p w_{n_k}\mR(Q_\eps^\frac1p w_{n_k})\, dx}\right)\\
&\leq \left(1+\frac{\eta}{Q_\infty}\right)^{p'-1}
\left(1+\frac{2|J'_\eps(v_{n_k})w_{n_k}|+2C_1\eta^{\frac1{p'}}}{\zeta_0}\right).
\end{align*}
Consequently, the above estimate and \eqref{eqn:p_mass} together give for all $k\geq k_0$,
\begin{align*}
c_\infty&\leq J_\infty(t_k^\infty\tilde{w}_k)\\
&\leq \left(\frac1{p'}-\frac12\right)(t_k^\infty)^{p'}\left(1+\frac{\eta}{Q_\infty}\right)^{p'-1}
	\int_{\R^N}|w_{n_k}|^{p'}\, dx\\
&\leq \left(1+\frac{\eta}{Q_\infty}\right)^{\frac{2(p'-1)}{2-p'}}
\left(1+\frac{2|J'_\eps(v_{n_k})w_{n_k}|+2C_1\eta^{\frac1{p'}}}{\zeta_0}\right)^{p'} J_\eps(v_{n_k}).
\end{align*}
Letting $k\to\infty$, we find
$$
c_\infty\leq \left(1+\frac{\eta}{Q_\infty}\right)^{\frac{2(p'-1)}{2-p'}}
\left(1+\frac{2C_1\eta^{\frac1{p'}}}{\zeta_0}\right)^{p'}d,
$$
and letting $\eta\to 0$ we obtain
$$
c_\infty\leq d,
$$
which contradicts the assumption $d<c_\infty$ and proves \eqref{eqn:strong_conv}.
From this we deduce the strong convergence $v_n\to v$ in $L^{p'}(\R^N)$ and the assertion follows.
\end{proof}
\begin{remark}
Under the stronger assumption $Q_\infty=\lim\limits_{|x|\to\infty}Q(x)$, the proof of the
preceding result simplifies. Indeed, having extracted a weakly converging subsequence and a critical point 
$v$ of $J_\eps$, the sequence $w_n=v_n-v$ can be shown to be a Palais-Smale sequence for $J_\infty$
at a level lying strictly below $c_\infty$. The representation lemma (Lemma~\ref{lem:splitting}) can then 
be used to conclude that $w_n\to 0$ strongly in $L^{p'}(\R^N)$.
\end{remark}

\section{Existence and concentration of dual ground states}
In this and the next section, we work under the following assumptions on $Q$.
\begin{itemize}
\item[(Q0)] $Q$ is continuous, bounded and $Q\geq 0$ on $\R^N$;
\item[(Q1)] $Q_\infty:=\limsup\limits_{|x|\to\infty}Q(x)<Q_0:=\sup\limits_{x\in\R^N}Q(x)$.
\end{itemize}
Consider the functional
$$
J_0(v):=\frac1{p'}\int_{\R^N}|v|^{p'}\, dx -\frac12\int_{\R^N}Q_0^\frac1pv\mR(Q_0^\frac1pv)\, dx, 
\quad v\in L^{p'}(\R^N)
$$
and the corresponding Nehari manifold
$$
\cN_0:=\{v\in L^{p'}(\R^N)\backslash\{0\}\ :\ J'_0(v)v=0\}
$$
associated to the limit problem
\begin{equation}\label{eqn:sp_lim}
-\Delta u -u = Q_0|u|^{p-2}u, \quad x\in\R^N.
\end{equation}
Lemma~\ref{lem:nehari_mp} implies that the level $c_0:=\inf\limits_{\cN_0}J_0$ 
is attained and coincides with the least-energy level, i.e.,
$$
c_0=\inf\{J_0(v)\, :\, v\in L^{p'}(\R^N), \ v\neq 0\text{ and }J_0'(v)=0\}.
$$
Our first goal will be to show, comparing the energy level $c_\eps$ with $c_0$, that for small $\eps>0$, 
$c_\eps$ is attained.
For this, let us denote the set of maximum points of $Q$ by
$$
M:=\{x\in\R^N\, :\, Q(x)=Q_0\}.
$$
Notice that $M\neq\varnothing$, since (Q0) and (Q1) are assumed.
We start by studying the projection on the Nehari manifold of truncations 
of translated and rescaled ground states of $J_0$.
Take a cut-off function $\eta\in C^\infty_c(\R^N)$, $0\leq\eta\leq 1$, 
such that $\eta\equiv 1$ in $B_1(0)$ and $\eta\equiv 0$ in $\R^N\backslash B_2(0)$.
For $y\in M$, $\eps>0$ we let
\begin{equation}\label{eqn:phi_eps_y}
\varphi_{\eps,y}(x):=\eta(\eps x-y)\ w( x-\eps^{-1}y),
\end{equation}
where $w\in L^{p'}(\R^N)$ is some fixed least-energy critical point of $J_0$.
\begin{lemma}\label{lem:phi}
There is $\eps^\ast>0$ such that 
for all $0<\eps\leq\eps^\ast$, $y\in M$, a unique
$t_{\eps,y}>0$ satisfying $t_{\eps,y}\varphi_{\eps,y}\in\cN_\eps$ exists. Moreover,
$$
\lim_{\eps\to 0^+}J_\eps(t_{\eps,y}\varphi_{\eps,y})=c_0, \text{ uniformly for }y\in M.
$$
\end{lemma}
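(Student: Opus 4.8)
The plan is to show that the truncated, rescaled ground state $\varphi_{\eps,y}$ lies in $U^+_\eps$ for small $\eps$ (so that the projection $t_{\eps,y}$ exists by the general theory recalled in Section~2), and then to prove that $J_\eps(t_{\eps,y}\varphi_{\eps,y})\to c_0$ by comparing the relevant integrals with those defining $J_0(w)=c_0$, uniformly in $y\in M$.

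First I would record the key convergence facts. Since $\eta(\eps\cdot-y)w(\cdot-\eps^{-1}y)$ is just a translate of $\eta(\eps\cdot)w$, and translations are isometries of $L^{p'}$, we have $\|\varphi_{\eps,y}\|_{p'}=\|\eta(\eps\cdot)w\|_{p'}$, which is independent of $y$ and converges to $\|w\|_{p'}$ as $\eps\to0$ by dominated convergence (the cut-off $\eta(\eps x)\to1$ pointwise and $0\le\eta\le1$). More precisely $\|\varphi_{\eps,y}-w(\cdot-\eps^{-1}y)\|_{p'}=\|(\eta(\eps\cdot)-1)w\|_{p'}\to0$. For the quadratic term, writing $\varphi_{\eps,y}=w(\cdot-\eps^{-1}y)+\rho_{\eps,y}$ with $\|\rho_{\eps,y}\|_{p'}\to0$ uniformly in $y$, and using that $\mR:L^{p'}\to L^p$ is bounded together with the fact that $Q_\eps^{1/p}$ is bounded by $Q_0^{1/p}$, we get
\[
\int_{\R^N}Q_\eps^{1/p}\varphi_{\eps,y}\,\mR(Q_\eps^{1/p}\varphi_{\eps,y})\,dx
=\int_{\R^N}Q_\eps^{1/p}w(\cdot-\tfrac y\eps)\,\mR\bigl(Q_\eps^{1/p}w(\cdot-\tfrac y\eps)\bigr)\,dx+o(1),
\]
uniformly in $y\in M$, where the error is controlled by $C\|Q\|_\infty^{2/p}(\|w\|_{p'}+\|\rho_{\eps,y}\|_{p'})\|\rho_{\eps,y}\|_{p'}$. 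The main point is then to show the leading integral converges to $\int_{\R^N}Q_0^{1/p}w\,\mR(Q_0^{1/p}w)\,dx$. Substituting $z=x-\eps^{-1}y$ turns $Q_\eps(x)=Q(\eps x)$ into $Q(\eps z+y)$, and since $Q$ is continuous with $y\in M$, we have $Q(\eps z+y)\to Q(y)=Q_0$ pointwise, uniformly for $y$ in the (nonempty, possibly unbounded but) level set $M$ is a slight subtlety — but continuity of $Q$ and boundedness suffice to pass to the limit inside, since by density $w$ can be approximated by a compactly supported function and on compact sets $Q(\eps z+y)\to Q_0$ uniformly in $z$, while the tails are controlled by $\|Q\|_\infty$ and the smallness of $w$ at infinity, all uniformly in $y$.

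Once these two limits are in hand, I would conclude as follows. Since $\int Q_0^{1/p}w\,\mR(Q_0^{1/p}w)=\|w\|_{p'}^{p'}>0$ (because $w\in\cN_0$), the quadratic term for $\varphi_{\eps,y}$ is positive and bounded below for small $\eps$, so $\varphi_{\eps,y}\in U^+_\eps$ and the unique $t_{\eps,y}>0$ with $t_{\eps,y}\varphi_{\eps,y}\in\cN_\eps$ exists, given by formula~\eqref{eqn:t_v}. From the two convergences, $t_{\eps,y}^{2-p'}\to\|w\|_{p'}^{p'}/\|w\|_{p'}^{p'}=1$, uniformly in $y$, hence $t_{\eps,y}\to1$ uniformly. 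Then
\[
J_\eps(t_{\eps,y}\varphi_{\eps,y})=\Bigl(\tfrac1{p'}-\tfrac12\Bigr)t_{\eps,y}^{p'}\|\varphi_{\eps,y}\|_{p'}^{p'}
\longrightarrow\Bigl(\tfrac1{p'}-\tfrac12\Bigr)\|w\|_{p'}^{p'}=J_0(w)=c_0,
\]
uniformly in $y\in M$, which is the claim.

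The main obstacle I anticipate is the \emph{uniformity in $y\in M$}: the set $M$ need not be compact, so one must be careful that all the error estimates above — the $o(1)$ from the cut-off, the $o(1)$ in the quadratic form, and the convergence $Q(\eps z+y)\to Q_0$ — hold uniformly over $y\in M$ and not just for fixed $y$. The cut-off error $\|(\eta(\eps\cdot)-1)w\|_{p'}$ is automatically uniform (it does not see $y$ at all, after translating). The genuinely delicate point is the convergence of $\int Q(\eps z+y)^{1/p}\cdots$; here I would split $w$ into a part supported in a large ball $B_\rho$ and a tail of small $L^{p'}$-norm, use that $Q$ is uniformly continuous on compacta only after noting $\{\eps z+y:|z|\le\rho,\ y\in M\}$ may still be unbounded — so instead I would use directly that $Q$ is continuous at \emph{each} point of $M$ with value $Q_0$, combined with $|Q(\eps z+y)-Q_0|\le\omega(\eps|z|)$ where $\omega$ is a modulus of continuity valid uniformly near $M$ provided $Q$ is, say, bounded and continuous — and if $M$ is unbounded one exploits that $Q_\infty<Q_0$ forces $M$ to be bounded, which resolves the issue cleanly. (Indeed, (Q1) implies $Q(x)<Q_0$ for $|x|$ large, so $M$ is compact, and uniform continuity of $Q$ on a neighborhood of $M$ gives everything at once.) I would make this observation early, as it simplifies the whole argument.
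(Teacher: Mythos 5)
Your proposal is correct and follows essentially the same route as the paper: show $Q^{1/p}(y+\eps\cdot)\eta(\eps\cdot)w\to Q_0^{1/p}w$ in $L^{p'}(\R^N)$ uniformly in $y\in M$, deduce $\varphi_{\eps,y}\in U^+_\eps$ with $t_{\eps,y}$ given by \eqref{eqn:t_v}, then $t_{\eps,y}\to 1$ and $J_\eps(t_{\eps,y}\varphi_{\eps,y})\to\bigl(\tfrac1{p'}-\tfrac12\bigr)\|w\|_{p'}^{p'}=c_0$ uniformly. Your closing observation that (Q1) forces $M$ to be bounded, hence compact, is exactly the fact the paper invokes ("since $M$ is compact and $Q$ is continuous") to get the uniformity, so the argument is complete.
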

\begin{proof}
We start by remarking that $Q(y+\eps\cdot)\eta(\eps\cdot)w\to w$ in $L^{p'}(\R^N)$, 
uniformly with respect to $y\in M$, since $M$ is compact
and $Q$ is continuous by assumption. Consequently,
\begin{align*}
 \int_{\R^N}Q_\eps^\frac1p\varphi_{\eps,y}\mR(Q_\eps^\frac1p\varphi_{\eps,y})\, dx
 &=\int_{\R^N}Q^\frac1p(y+ \eps z)\eta(\eps z)w(z)\mR(Q^\frac1p(y+\eps\cdot)\eta(\eps\cdot)w)(z)\, dz\\
 &\longrightarrow \int_{\R^N}Q_0^\frac1pw\mR(Q_0^\frac1pw)\, dz=\left(\frac1{p'}-\frac12\right)^{-1}c_0>0,
  \quad\text{as }\eps\to 0^+,
\end{align*}
uniformly for $y\in M$. Therefore, $\varphi_{\eps,y}\in U_\eps^+$ for all $y\in M$ and
$\eps>0$ small enough, which shows the first assertion with $t_{\eps,y}$ given by \eqref{eqn:t_v}. 
In addition, for all $y\in M$,
$$
\int_{\R^N}|\varphi_{\eps,y}|^{p'}\, dx=\int_{\R^N}|\eta(\eps z)w(z)|^{p'}\, dz \to \int_{\R^N}|w|^{p'}\, dz
=\left(\frac1{p'}-\frac12\right)^{-1}c_0,\quad\text{as }\eps\to 0^+.
$$
As a consequence, $t_{\eps, y}\to 1$ as $\eps\to 0^+$, uniformly for $y\in M$, and
we obtain $J_\eps(t_{\eps, y}\varphi_{\eps,y})\to c_0$ as $\eps\to 0^+$, uniformly for $y\in M$. 
The second assertion follows.
\end{proof}
\begin{lemma}\label{lem:c>c_0}
For all $\eps>0$ there holds $c_\eps\geq c_0$.
Moreover,
$\lim\limits_{\eps\to 0^+}c_\eps=c_0$.
\end{lemma}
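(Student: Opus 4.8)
The plan is to prove the two claims in turn, the inequality $c_\eps\ge c_0$ carrying the essential content and the limit then dropping out of Lemma~\ref{lem:phi} together with this lower bound.

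For $c_\eps\ge c_0$ I would fix $\eps>0$, take an arbitrary $v\in\cN_\eps$, and manufacture from it a competitor lying on $\cN_0$ whose $J_0$–energy does not exceed $J_\eps(v)$. Because $\mR$ is \emph{not} sign-definite, a crude pointwise comparison ``$J_\eps\ge J_0$'' is unavailable; the trick is instead to transfer the weight, setting $w:=(Q_\eps/Q_0)^{1/p}v\in L^{p'}(\R^N)$, so that $Q_0^{1/p}w=Q_\eps^{1/p}v$ and hence the indefinite nonlocal quadratic form is left untouched:
\[
\int_{\R^N}Q_0^{1/p}w\,\mR(Q_0^{1/p}w)\,dx=\int_{\R^N}Q_\eps^{1/p}v\,\mR(Q_\eps^{1/p}v)\,dx=\int_{\R^N}|v|^{p'}\,dx>0,
\]
the last equality because $v\in\cN_\eps$. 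Thus $w$ lies in the positivity set of the quadratic part of $J_0$, so by the formula \eqref{eqn:t_v} (applied with $Q_0$ in place of $Q_\eps$) there is a unique $t_w>0$ with $t_ww\in\cN_0$ and $t_w^{2-p'}=\|w\|_{p'}^{p'}/\|v\|_{p'}^{p'}$. Since $Q_\eps=Q(\eps\cdot)\le Q_0$ pointwise we have $|w|\le|v|$, whence $\|w\|_{p'}\le\|v\|_{p'}$ and therefore $t_w\le 1$ (recall $2-p'>0$). Using that on any Nehari manifold the energy equals $\bigl(\frac1{p'}-\frac12\bigr)\|\cdot\|_{p'}^{p'}$ with $\frac1{p'}-\frac12>0$, I obtain
\[
c_0\le J_0(t_ww)=\Bigl(\frac1{p'}-\frac12\Bigr)t_w^{p'}\|w\|_{p'}^{p'}\le\Bigl(\frac1{p'}-\frac12\Bigr)\|v\|_{p'}^{p'}=J_\eps(v);
\]
taking the infimum over $v\in\cN_\eps$ yields $c_0\le c_\eps$ for every $\eps>0$.

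For the limit I would simply combine this lower bound with the upper bound furnished by Lemma~\ref{lem:phi}: fixing any $y\in M$, for all small $\eps>0$ the projected truncated ground state $t_{\eps,y}\varphi_{\eps,y}$ belongs to $\cN_\eps$, so $c_0\le c_\eps\le J_\eps(t_{\eps,y}\varphi_{\eps,y})$, and letting $\eps\to 0^+$ the right-hand side tends to $c_0$; hence $c_\eps\to c_0$. The only genuinely delicate point in the whole argument is the observation that the weight-transfer $v\mapsto(Q_\eps/Q_0)^{1/p}v$ preserves the nonlocal term while only shrinking the $L^{p'}$–norm — everything else (existence and explicit form of the Nehari projection, and the identity $J|_{\cN}=\bigl(\frac1{p'}-\frac12\bigr)\|\cdot\|_{p'}^{p'}$) is already in place from Section~2 and the beginning of Section~3.
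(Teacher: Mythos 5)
Your proof is correct and follows essentially the same route as the paper: the weight transfer $v\mapsto(Q_\eps/Q_0)^{1/p}v$ preserving the nonlocal quadratic term while shrinking the $L^{p'}$-norm, the Nehari projection with $t\leq 1$, the identity $J|_{\cN}=\bigl(\tfrac1{p'}-\tfrac12\bigr)\|\cdot\|_{p'}^{p'}$, and then Lemma~\ref{lem:phi} for the upper bound in the limit. No gaps.
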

\begin{proof}
Consider $v_\eps\in\cN_\eps$ and set $v_0:=\left(\frac{Q_\eps}{Q_0}\right)^\frac1p v_\eps$.
Notice that (Q1) implies $|v_0|\leq |v_\eps|$ a.e. on $\R^N$. 
Since $v_\eps\in U^+_\eps$, we find
$$
\int_{\R^N}Q_0^\frac1p v_0\mR(Q_0^\frac1p v_0)\, dx =
\int_{\R^N}Q_\eps^\frac1pv_\eps\mR(Q_\eps^\frac1p v_\eps)\, dx>0,
$$
i.e. $v_0\in U^+_0$. Hence, with
$$
t_\eps^{2-p'}=\frac{\int_{\R^N}|v_0|^{p'}\, dx}{\int_{\R^N}Q_0^\frac1p v_0\mR(Q_0^\frac1p v_0)\, dx}
\leq \frac{\int_{\R^N}|v_\eps|^{p'}\, dx}{\int_{\R^N}Q_\eps^\frac1p v_\eps\mR(Q_\eps^\frac1p v_\eps)\, dx}=1,
$$
it follows that $t_\eps v_0\in \cN_0$, and we obtain
$$
c_0\leq J_0(t_\eps v_0)=\left(\frac1{p'}-\frac12\right) t_\eps^{p'}\int_{\R^N}|v_0|^{p'}\, dx
\leq \left(\frac1{p'}-\frac12\right) \int_{\R^N}|v_\eps|^{p'}\, dx=J_\eps(v_\eps).
$$
Since $v_\eps\in\cN_\eps$ was arbitrarily chosen, we conclude that 
$c_\eps=\inf\limits_{\cN_\eps}J_\eps\geq c_0$.
On the other hand, Lemma~\ref{lem:phi} gives for $y\in M$, 
$c_\eps\leq J_\eps(t_{\eps,y}\varphi_{\eps,y})\to c_0$ as $\eps\to 0^+$.
Hence, $\lim\limits_{\eps\to 0^+}c_\eps= c_0$ and the lemma is proved.
\end{proof}
\begin{proposition}\label{prop:c_eps_attained}
There is $\eps_0>0$ such that for all $\eps<\eps_0$ the least-energy level $c_\eps$ is attained.
\end{proposition}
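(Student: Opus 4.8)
The plan is first to bring the mountain--pass level $c_\eps$ \emph{strictly} below $c_\infty$ for all sufficiently small $\eps$, and then to extract a minimizer of $J_\eps$ on $\cN_\eps$ from an Ekeland--type minimizing sequence, which is relatively compact by the Palais--Smale condition below $c_\infty$ (Lemma~\ref{lem:PS_J_eps}). The point I would settle first is the strict gap $c_0<c_\infty$. This is the mirror image of Lemma~\ref{lem:c>c_0}: given a least-energy critical point $w_\infty\in\cN_\infty$ of $J_\infty$, one sets $w_0:=(Q_\infty/Q_0)^{1/p}w_\infty$, so that $Q_0^{1/p}w_0=Q_\infty^{1/p}w_\infty$ and hence $\int_{\R^N}Q_0^{1/p}w_0\mR(Q_0^{1/p}w_0)\,dx=\|w_\infty\|_{p'}^{p'}>0$, i.e.\ $w_0\in U_0^+$. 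A short computation with \eqref{eqn:t_v} gives the Nehari projection constant $t_{w_0}=(Q_\infty/Q_0)^{1/(p-2)}<1$, whence $c_0\le J_0(t_{w_0}w_0)=(Q_\infty/Q_0)^{2/(p-2)}c_\infty<c_\infty$ by (Q1). (Equivalently, for a constant coefficient $Q$ the least-energy level satisfies the scaling identity $c(Q)=Q^{-2/(p-2)}c(1)$, which is strictly decreasing in $Q$, so (Q1) yields $c_\infty>c_0$ at once.)

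Since $\lim_{\eps\to 0^+}c_\eps=c_0$ by Lemma~\ref{lem:c>c_0}, the strict gap produces $\eps_0>0$ such that $c_\eps<c_\infty$ for every $\eps<\eps_0$; I would then fix such an $\eps$. If $Q_\infty=0$, then $Q(x)\to 0=\inf_{\R^N}Q$ as $|x|\to\infty$, so $Q_\eps(x)\to\inf_{\R^N}Q_\eps$ as $|x|\to\infty$ and $c_\eps$ is attained directly by Lemma~\ref{lem:nehari_mp}(iii); hence one may assume $Q_\infty>0$. As $\cN_\eps$ is a complete $C^1$-submanifold of $L^{p'}(\R^N)$ (see Section~2) on which $J_\eps\ge c_\eps>0$, Ekeland's variational principle furnishes a minimizing sequence $(v_n)_n\subset\cN_\eps$ with $(J_\eps|_{\cN_\eps})'(v_n)\to 0$; since $\cN_\eps$ is a natural constraint, $(v_n)_n$ is then a Palais--Smale sequence for the free functional $J_\eps$ at level $c_\eps$. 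Because $c_\eps<c_\infty$, Lemma~\ref{lem:PS_J_eps} applies and, along a subsequence, $v_n\to v$ strongly in $L^{p'}(\R^N)$. From $J_\eps(v_n)=\bigl(\tfrac1{p'}-\tfrac12\bigr)\|v_n\|_{p'}^{p'}\to c_\eps>0$ one deduces $\|v\|_{p'}^{p'}=\bigl(\tfrac1{p'}-\tfrac12\bigr)^{-1}c_\eps>0$, so $v\neq 0$, and continuity of $J_\eps$ and $J_\eps'$ gives $v\in\cN_\eps$ with $J_\eps(v)=c_\eps$. Thus $c_\eps$ is attained whenever $\eps<\eps_0$.

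The bulk of the difficulty has already been absorbed into Lemma~\ref{lem:PS_J_eps} (and, beneath it, into the decay estimate for the nonlocal interaction, Lemma~\ref{lem:interaction}), so in this proposition the only genuinely delicate point is to recognize that the gap $c_0<c_\infty$ must be strict: since $c_\eps\ge c_0$ always holds, a non-strict inequality would be useless for pushing $c_\eps$ under $c_\infty$. The one remaining minor subtlety is the borderline case $Q_\infty=0$, which lies outside the hypotheses of Lemma~\ref{lem:PS_J_eps} but is disposed of directly via Lemma~\ref{lem:nehari_mp}(iii).
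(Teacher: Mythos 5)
Your proposal is correct and follows essentially the same route as the paper: Lemma~\ref{lem:c>c_0} together with (Q1) yields $c_\eps<c_\infty$ for small $\eps$, Ekeland's principle on the complete $C^1$-manifold $\cN_\eps$ produces a Palais--Smale sequence at level $c_\eps$, and Lemma~\ref{lem:PS_J_eps} gives a convergent subsequence whose limit attains $c_\eps$. Your additions—the explicit scaling computation showing $c_0\leq (Q_\infty/Q_0)^{2/(p-2)}c_\infty<c_\infty$ and the separate treatment of the case $Q_\infty=0$ via Lemma~\ref{lem:nehari_mp}(iii)—are correct fillings-in of details the paper leaves implicit.
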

\begin{proof}
By Lemma~\ref{lem:c>c_0} and Condition (Q1) there is $\eps_0>0$
such that $c_\eps<c_\infty$ for all $0<\eps<\eps_0$. For such $\eps$,
using the fact that $\cN_\eps$ is a $C^1$-submanifold of $L^{p'}(\R^N)$, 
we obtain from Ekeland's variational principle \cite[Theorem 3.1]{ekeland74} 
the existence of a Palais-Smale sequence for $J_\eps$ on $\cN_\eps$, at level $c_\eps$, 
and Lemma~\ref{lem:PS_J_eps} concludes the proof.
\end{proof}
Setting $k_0=\eps_0^{-1}$, the assertion (i) in Theorem \ref{thm1} from the Introduction 
is a direct consequence of the above result. Our next step is to examine the
behavior of critical points of $J_\eps$ in the limit $\eps\to 0$.
\begin{proposition}\label{prop:limit_gs}
Let $(\eps_n)_n\subset(0,\infty)$ satisfy $\eps_n\to 0$ as $n\to\infty$. Consider for each $n$
some $v_n\in \cN_{\eps_n}$ and assume that $J_{\eps_n}(v_n)\to c_0$ as $n\to\infty$. 
Then, there is $x_0\in M$, a critical point $w_0$ of $J_0$ at level $c_0$ and a sequence $(y_n)_n\subset\R^N$ 
such that (up to a subsequence) 
$$
\eps_ny_n\to x_0\quad\text{ and }\quad
\|v_n(\cdot+y_n)- w_0\|_{p'}\to 0\ \text{ as }\ n\to\infty.
$$
\end{proposition}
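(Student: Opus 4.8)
The plan is to leverage the substitution already used in the proof of Lemma~\ref{lem:c>c_0} in order to turn $(v_n)_n$ into a \emph{constant-coefficient} minimizing sequence, apply the representation lemma (Lemma~\ref{lem:splitting}) to it, and then transfer the information back to $(v_n)_n$. Write $L:=\bigl(\frac1{p'}-\frac12\bigr)^{-1}c_0>0$. Since $v_n\in\cN_{\eps_n}$ we have $J_{\eps_n}(v_n)=\bigl(\frac1{p'}-\frac12\bigr)\|v_n\|_{p'}^{p'}$, so $\|v_n\|_{p'}^{p'}\to L$ and $(v_n)_n$ is bounded. Set $\hat v_n:=(Q_{\eps_n}/Q_0)^{1/p}v_n$, so that $Q_0^{1/p}\hat v_n=Q_{\eps_n}^{1/p}v_n$; then the nonlocal quadratic term is unchanged, $\int_{\R^N}Q_0^{1/p}\hat v_n\mR(Q_0^{1/p}\hat v_n)\,dx=\int_{\R^N}Q_{\eps_n}^{1/p}v_n\mR(Q_{\eps_n}^{1/p}v_n)\,dx=\|v_n\|_{p'}^{p'}>0$, while $\|\hat v_n\|_{p'}\le\|v_n\|_{p'}$ by (Q1). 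Let $\hat t_n\in(0,1]$ be the unique number with $u_n:=\hat t_n\hat v_n\in\cN_0$ (cf.\ \eqref{eqn:t_v}). Repeating verbatim the estimate in Lemma~\ref{lem:c>c_0} gives $c_0\le J_0(u_n)\le\bigl(\frac1{p'}-\frac12\bigr)\|v_n\|_{p'}^{p'}=J_{\eps_n}(v_n)\to c_0$, so $(u_n)_n$ is a minimizing sequence for $J_0$ over $\cN_0$; comparing $L^{p'}$-norms one also obtains $\hat t_n\to1$ and $\|\hat v_n\|_{p'}^{p'}\to L$.

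Since $\cN_0$ is a complete $C^1$-manifold and a natural constraint for $J_0$, Ekeland's variational principle applied to the minimizing sequence $(u_n)_n$ yields a (PS)$_{c_0}$-sequence $(\tilde u_n)_n$ for $J_0$ with $\|\tilde u_n-u_n\|_{p'}\to0$. As $Q_0$ is a positive constant, Lemma~\ref{lem:splitting} applies to $(\tilde u_n)_n$ and produces $m\ge1$, nontrivial critical points $w^{(1)},\dots,w^{(m)}$ of $J_0$ and translations $(x_n^{(j)})_n$ with $\tilde u_n=\sum_{j=1}^m w^{(j)}(\cdot+x_n^{(j)})+o(1)$ in $L^{p'}$ and $\sum_{j=1}^m J_0(w^{(j)})=c_0$. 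Each nontrivial critical point of $J_0$ lies on $\cN_0$, hence has energy $\ge c_0>0$; therefore $m=1$. Setting $w_0:=w^{(1)}$ and $y_n:=-x_n^{(1)}$ we conclude that $w_0$ is a critical point of $J_0$ at level $c_0$ and, since $\|\tilde u_n-u_n\|_{p'}\to0$, that $u_n(\cdot+y_n)\to w_0$ in $L^{p'}$.

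Now I transfer back to $v_n$. Unraveling $u_n=\hat t_n\hat v_n$, we have $u_n(\cdot+y_n)=\hat t_n\,\bigl(Q(\eps_n\cdot+\zeta_n)/Q_0\bigr)^{1/p}v_n(\cdot+y_n)$ with $\zeta_n:=\eps_n y_n$; writing $\bar v_n:=v_n(\cdot+y_n)$ and $a_n:=Q(\eps_n\cdot+\zeta_n)\in[0,Q_0]$ and using $\hat t_n\to1$, this reads $a_n^{1/p}\bar v_n\to Q_0^{1/p}w_0$ in $L^{p'}$. Since $\|\bar v_n\|_{p'}^{p'}=\|v_n\|_{p'}^{p'}\to L$ and $\|a_n^{1/p}\bar v_n\|_{p'}^{p'}\to Q_0^{p'/p}\|w_0\|_{p'}^{p'}=Q_0^{p'/p}L$ (because $\|w_0\|_{p'}^{p'}=\bigl(\frac1{p'}-\frac12\bigr)^{-1}J_0(w_0)=L$), and since $Q_0^{p'/p}-a_n^{p'/p}\ge0$, we obtain $\int_{\R^N}\bigl(Q_0^{p'/p}-a_n^{p'/p}\bigr)|\bar v_n|^{p'}\,dx\to0$. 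The elementary pointwise inequality $\bigl(Q_0^{1/p}-s^{1/p}\bigr)^{p'}\le C\bigl(Q_0^{p'/p}-s^{p'/p}\bigr)$ for $s\in[0,Q_0]$ (the quotient extends continuously to $[0,Q_0]$ and vanishes at $s=Q_0$ because $p'>1$) then yields $\bigl(Q_0^{1/p}-a_n^{1/p}\bigr)\bar v_n\to0$ in $L^{p'}$, whence $\bar v_n=Q_0^{-1/p}\bigl[(Q_0^{1/p}-a_n^{1/p})\bar v_n+a_n^{1/p}\bar v_n\bigr]\to w_0$ in $L^{p'}$.

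It remains to see that $(\zeta_n)_n$ is bounded and that $x_0\in M$. If $|\zeta_n|\to\infty$ along a subsequence, then on each ball $B_\rho$ one has $a_n\le Q_\infty+o(1)<Q_0$, so $|Q_0^{1/p}-a_n^{1/p}|\ge\delta_0>0$ there for $n$ large; combined with $(Q_0^{1/p}-a_n^{1/p})\bar v_n\to0$ in $L^{p'}$ this forces $\int_{B_\rho}|\bar v_n|^{p'}\,dx\to0$, hence $\int_{B_\rho}|w_0|^{p'}\,dx=0$ for every $\rho>0$, contradicting $w_0\ne0$. So $(\zeta_n)_n$ is bounded; passing to a subsequence, $\zeta_n\to x_0\in\R^N$. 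Then $a_n\to Q(x_0)$ pointwise, and since $\bar v_n\to w_0$ in $L^{p'}$ while $|Q_0^{1/p}-a_n^{1/p}|\le2Q_0^{1/p}$, the sequence $(Q_0^{1/p}-a_n^{1/p})w_0$ has the same $L^{p'}$-limit as $(Q_0^{1/p}-a_n^{1/p})\bar v_n$, namely $0$, while by dominated convergence it also converges in $L^{p'}$ to $(Q_0^{1/p}-Q(x_0)^{1/p})w_0$. Hence $(Q_0^{1/p}-Q(x_0)^{1/p})w_0=0$ and, $w_0$ being nontrivial, $Q(x_0)=Q_0$, i.e.\ $x_0\in M$. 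As $\eps_n y_n=\zeta_n\to x_0\in M$ and $\|v_n(\cdot+y_n)-w_0\|_{p'}=\|\bar v_n-w_0\|_{p'}\to0$, the proposition follows. The one genuinely non-routine point is the reduction in the first paragraph: because $\mR$ is not positive one cannot compare the nonlocal energies for $Q_{\eps_n}$ and $Q_0$ by any monotonicity, but the substitution $\hat v_n=(Q_{\eps_n}/Q_0)^{1/p}v_n$ leaves $Q^{1/p}v$ — and hence the nonlocal term — untouched while only shrinking the $L^{p'}$-norm, which is exactly what is needed to manufacture a bona fide constant-coefficient minimizing sequence; everything after that (the representation lemma, the pointwise estimates, locating $x_0$) is routine bookkeeping.
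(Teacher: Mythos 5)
Your proof is correct, and its core is the same as the paper's: the substitution $\hat v_n=(Q_{\eps_n}/Q_0)^{1/p}v_n$ (leaving the nonlocal term untouched while shrinking the $L^{p'}$-norm), the projection onto $\cN_0$ with $\hat t_n\in(0,1]$, the squeeze $c_0\le \hat t_n^2 J_{\eps_n}(v_n)\le J_{\eps_n}(v_n)\to c_0$, and then Ekeland plus the representation lemma (with $m=1$ forced by $J_0(w^{(j)})\ge c_0$) are exactly the paper's steps. Where you genuinely diverge is the endgame. The paper first proves boundedness of $(\eps_n y_n)$ by a case distinction ($Q_\infty=0$ via a weak-limit argument, $Q_\infty>0$ via Fatou's lemma applied to the identity $\|v_n\|_{p'}^{p'}=\int_{\R^N}\bigl(Q_0/Q(\eps_n\cdot+\eps_n y_n)\bigr)^{p'-1}|v_{0,n}(\cdot+y_n)|^{p'}dx$), then gets $Q(x_0)=Q_0$ by dominated convergence in the same identity, and only at the end recovers $v_n(\cdot+y_n)\to w_0$ by writing $v_n(\cdot+y_n)=\bigl(Q_0/Q(\eps_n(\cdot+y_n))\bigr)^{1/p}v_{0,n}(\cdot+y_n)$ and applying dominated convergence again. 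You instead extract the strong convergence $\bar v_n\to w_0$ first, from the matching of norms $\|\bar v_n\|_{p'}^{p'}\to L$ and $\|a_n^{1/p}\bar v_n\|_{p'}^{p'}\to Q_0^{p'/p}L$ together with the elementary inequality $(Q_0^{1/p}-s^{1/p})^{p'}\le C\,(Q_0^{p'/p}-s^{p'/p})$ on $[0,Q_0]$, and then obtain boundedness of $\eps_n y_n$ and $x_0\in M$ in one stroke from $(Q_0^{1/p}-a_n^{1/p})\bar v_n\to 0$, with no case split between $Q_\infty=0$ and $Q_\infty>0$ and without ever dividing by $Q$ (a point that needs care in the paper's final step, since (Q0) allows $Q$ to vanish). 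The two routes are of comparable length; yours trades the Fatou/dominated-convergence bookkeeping for a pointwise inequality and is slightly more robust near the zeros of $Q$, while the paper's version makes the role of condition (Q1) in excluding escape to infinity more explicit through the energy comparison $c_0\ge (Q_0/Q_\infty)^{p'-1}c_0$.
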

\begin{proof}
For each $n\in\N$, set $v_{0,n}:=\left(\frac{Q_{\eps_n}}{Q_0}\right)^\frac1pv_n$. It follows
that $|v_{0,n}|\leq |v_n|$ a.e. on $\R^N$ and that
$$
\int_{\R^N}Q_0^\frac1pv_{0,n}\mR(Q_0^\frac1pv_{0,n})\, dx 
= \int_{\R^N}Q_{\eps_n}^\frac1pv_n\mR(Q_{\eps_n}^\frac1pv_n)\, dx>0.
$$
Therefore, setting
$$
t_{0,n}^{2-p'}=\frac{\int_{\R^N}|v_{0,n}|^{p'}\, dx}{\int_{\R^N}Q_0^\frac1pv_{0,n}\mR(Q_0^\frac1pv_{0,n})\, dx}
$$
we find that $t_{n,0}v_{0,n}\in \cN_0$ and $0<t_{0,n}\leq 1$. As a consequence, we can write
\begin{align*}
c_0&\leq J_0(t_{0,n}v_{0,n})
=\left(\frac1{p'}-\frac12\right)t_{0,n}^2\int_{\R^N}Q_0^\frac1pv_{0,n}\mR(Q_0^\frac1pv_{0,n})\, dx\\
&=\left(\frac1{p'}-\frac12\right)t_{0,n}^2\int_{\R^N}Q_{\eps_n}^\frac1pv_n\mR(Q_{\eps_n}^\frac1pv_n)\, dx\\
&=t_{0,n}^2J_{\eps_n}(v_n)\leq J_{\eps_n}(v_n)\to c_0, \quad\text{ as }n\to\infty.
\end{align*}
In particular, we find 
$$
\lim_{n\to\infty}t_{0,n}=1,
$$
and $(t_{0,n}v_{0,n})_n\subset\cN_0$ is thus a minimizing sequence for $J_0$ on $\cN_0$.
Using Ekeland's variational principle \cite{ekeland74} and the fact that $\cN_0$ is a natural constraint,
we obtain the existence of a (PS)$_{c_0}$-sequence $(w_n)_n\subset L^{p'}(\R^N)$ for $J_0$ with the
property that $\|v_{0,n}-w_n\|_{p'}\to 0$, as $n\to\infty$.

By Lemma \ref{lem:splitting}, there exists a critical point $w_0$ for $J_0$
at level $c_0$ and a sequence $(y_n)_n\subset\R^N$ such that (up to a subsequence)
$\|w_n(\cdot+y_n)-w_0\|_{p'}\to 0$, as $n\to\infty$.
Therefore, 
$$
v_{0,n}(\cdot+y_n)\to w_0\quad\text{ strongly in }L^{p'}(\R^N),\text{ as }n\to\infty.
$$

We now claim that $(\eps_ny_n)_n$ is bounded. 
Suppose by contradiction that some subsequence (which we still call $(\eps_ny_n)_n$)
has the property $\lim\limits_{n\to\infty}|\eps_ny_n|=\infty$. We distinguish two cases.

(1) If $Q_\infty=0$, then
$Q(\eps_n \cdot+\eps_n y_n)\to 0$, as $n\to\infty$, holds uniformly on bounded sets of $\R^N$. 
From the definition of $v_{0,n}$, we infer that $v_{0,n}(\cdot+y_n)\weakto 0$ and therefore $w_0=0$,
in contradiction to $J_0(w_0)=c_0>0$. Hence, $(\eps_ny_n)_n$ is bounded in this case.

(2) If $Q_\infty>0$ instead, Fatou's lemma and the strong convergence $v_{0,n}(\cdot+y_n)\to w_0$ 
together imply
\begin{align*}
c_0&=\lim_{n\to\infty}J_{\eps_n}(v_n)
=\lim_{n\to\infty}\left(\frac1{p'}-\frac12\right)\int_{\R^N} |v_n|^{p'}\, dx\\
&=\lim_{n\to\infty}\left(\frac1{p'}-\frac12\right)\int_{\R^N} |v_n(x+y_n)|^{p'}\, dx\\
&=\liminf_{n\to\infty}\left(\frac1{p'}-\frac12\right)
\int_{\R^N}\left(\frac{Q_0}{Q(\eps_nx+\eps_ny_n)}\right)^{p'-1}|v_{0,n}(x+y_n)|^{p'}\, dx\\
&\geq\left(\frac1{p'}-\frac12\right)\int_{\R^N}\left(\frac{Q_0}{Q_\infty}\right)^{p'-1}|w_0|^{p'}\, dx\\
&=\left(\frac{Q_0}{Q_\infty}\right)^{p'-1}c_0,
\end{align*}
and this contradicts (Q1). Therefore, $(\eps_ny_n)_n$ is a bounded sequence and we may 
assume (going to a subsequence) that $\eps_n y_n\to x_0\in\R^N$. 
Since $Q(\eps_n x+\eps_ny_n)\to Q(x_0)$, as $n\to\infty$,
uniformly on bounded sets, the argument of Case (1) above gives $Q(x_0)>0$ and, using the Dominated
Convergence Theorem, we see that $Q(x_0)=Q_0$, since the following holds.
\begin{align*}
c_0&=\lim_{n\to\infty}J_{\eps_n}(v_n)
=\lim_{n\to\infty}\left(\frac1{p'}-\frac12\right)\int_{\R^N} |v_n|^{p'}\, dx\\
&=\lim_{n\to\infty}\left(\frac1{p'}-\frac12\right)
\int_{\R^N}\left(\frac{Q_0}{Q(\eps_n x+\eps_ny_n)}\right)^{p'-1}|v_{0,n}(x+y_n)|^{p'}\, dx\\
&=\left(\frac1{p'}-\frac12\right)\int_{\R^N}\left(\frac{Q_0}{Q(x_0)}\right)^{p'-1}|w_0|^{p'}\, dx\\
&=\left(\frac{Q_0}{Q(x_0)}\right)^{p'-1}c_0.
\end{align*}
Going back to the original sequence we obtain
\begin{align*}
v_n(\cdot+y_n)=\left(\frac{Q_0}{Q(\eps_n(\cdot+y_n))}\right)^\frac1p v_{0,n}(\cdot+y_n)
\to \left(\frac{Q_0}{Q(x_0)}\right)^\frac1p w_0=w_0,\quad\text{as }n\to\infty,
\end{align*}
strongly in $L^{p'}(\R^N)$, using again the Dominated Convergence Theorem.
The proof is complete.
\end{proof}
In the next result, we prove the assertion (ii) in Theorem \ref{thm1} from the Introduction.
For the reader's convenience, let us recall its formulation.
\begin{theorem}\label{thm:conc_gs}
Let $k_0:=\eps_0^{-1}>0$, where $\eps_0>0$ is given by Proposition~\ref{prop:c_eps_attained}.
For every sequence $(k_n)_n\subset(k_0,\infty)$ satisfying $k_n\to \infty$ as $n\to\infty$, 
and every sequence $(u_n)_n$ such that $u_n$ is a dual ground state of
$$
-\Delta u - k_n u=Q(x)|u|^{p-2}u \quad\text{in }\ \R^N,
$$
there is $x_0\in M$, a dual ground state $u_0$ of \eqref{eqn:sp_lim}
and a sequence $(x_n)_n\subset\R^N$ such that (up to a subsequence)
$\lim\limits_{n\to\infty}x_n= x_0$ and  
$$
k_n^{-\frac{2}{p-2}}u_n\left(\frac{\cdot}{k_n}+x_n\right)\to u_0\quad\text{ in }\ L^p(\R^N),
\ \text{ as }\ n\to\infty.
$$ 
\end{theorem}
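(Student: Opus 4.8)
The plan is to deduce this from Proposition~\ref{prop:limit_gs}, applied to the critical points of the dual functional that produce the dual ground states $u_n$, and then to transport the resulting $L^{p'}$-convergence through the resolvent operator $\mR$ to obtain the claimed $L^p$-convergence.

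First I would set $\eps_n:=k_n^{-1}\to0$. By the definition of a dual ground state, for each $n$ there is a nontrivial critical point $v_n$ of $J_{\eps_n}$ with $J_{\eps_n}(v_n)=c_{\eps_n}$; in particular $v_n\in\cN_{\eps_n}$, and
\[
u_n(x)=k_n^{\frac{2}{p-2}}\,\mR\bigl(Q_{\eps_n}^{\frac1p}v_n\bigr)(k_n x),\qquad x\in\R^N.
\]
By Lemma~\ref{lem:c>c_0} we have $c_{\eps_n}\to c_0$, hence $J_{\eps_n}(v_n)\to c_0$, so Proposition~\ref{prop:limit_gs} yields, along a subsequence, a point $x_0\in M$, a critical point $w_0$ of $J_0$ at level $c_0$, and a sequence $(y_n)_n\subset\R^N$ with $\eps_ny_n\to x_0$ and $\|v_n(\cdot+y_n)-w_0\|_{p'}\to0$. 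Since $Q_0$ is constant, Lemma~\ref{lem:nehari_mp} shows that $c_0$ is the mountain pass level for $J_0$ and is attained, so $w_0$ is a nontrivial critical point of $J_0$ at the mountain pass level; hence $u_0:=\mR(Q_0^{\frac1p}w_0)$ is, by the terminology of Section~2 applied with $k=1$, a dual ground state of \eqref{eqn:sp_lim}.

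Next, with $x_n:=\eps_ny_n$ (so that $x_n\to x_0$), I would use the identity $k_nx_n=k_n\eps_ny_n=y_n$ and the translation invariance of the convolution operator $\mR$ to rewrite
\[
k_n^{-\frac{2}{p-2}}u_n\Bigl(\frac{\cdot}{k_n}+x_n\Bigr)=\mR\bigl(Q_{\eps_n}^{\frac1p}v_n\bigr)(\cdot+y_n)=\mR(g_n),\qquad g_n(x):=Q(\eps_n x+x_n)^{\frac1p}\,v_n(x+y_n).
\]
It then suffices to prove that $g_n\to Q_0^{\frac1p}w_0$ in $L^{p'}(\R^N)$: the continuity of $\mR\colon L^{p'}(\R^N)\to L^p(\R^N)$ immediately gives $\mR(g_n)\to\mR(Q_0^{\frac1p}w_0)=u_0$ in $L^p(\R^N)$, which is the assertion. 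For the convergence of $g_n$ I would split
\[
g_n-Q_0^{\frac1p}w_0=Q(\eps_n\,\cdot+x_n)^{\frac1p}\bigl(v_n(\cdot+y_n)-w_0\bigr)+\bigl(Q(\eps_n\,\cdot+x_n)^{\frac1p}-Q_0^{\frac1p}\bigr)w_0;
\]
the first term has $L^{p'}$-norm $\le\|Q\|_\infty^{1/p}\|v_n(\cdot+y_n)-w_0\|_{p'}\to0$, while for the second one uses that $Q(\eps_n\,\cdot+x_n)^{1/p}\to Q_0^{1/p}$ uniformly on every ball (by continuity of $Q$, $\eps_n\to0$, $x_n\to x_0$ and $Q(x_0)=Q_0$ since $x_0\in M$), together with the uniform bound $\|Q\|_\infty^{1/p}$ on $\R^N$ and $\int_{|x|>R}|w_0|^{p'}\,dx\to0$ as $R\to\infty$, in the standard way: split the integral over $B_R$ and its complement, let $n\to\infty$, then $R\to\infty$.

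The essential difficulty is entirely contained in Proposition~\ref{prop:limit_gs} and the Palais--Smale analysis of Section~2 behind it; what remains here is routine, the only points requiring care being the bookkeeping of the rescaling and translation (the identity $k_nx_n=y_n$ and the translation invariance of $\mR$) and the tail estimate compensating for the possible non-uniform convergence of $Q$ at infinity. Nontriviality of $u_0$ is automatic, since $w_0\ne0$ and $J_0'(w_0)=0$ force $Q_0^{1/p}u_0=|w_0|^{p'-2}w_0\ne0$.
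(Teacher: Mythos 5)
Your proposal is correct and follows essentially the same route as the paper: represent $u_n$ via a least-energy critical point $v_n$ of $J_{\eps_n}$, invoke Lemma~\ref{lem:c>c_0} and Proposition~\ref{prop:limit_gs} to get $x_0$, $w_0$, $(y_n)$, then push the $L^{p'}$-convergence of $Q_{\eps_n}^{1/p}(\cdot+y_n)v_n(\cdot+y_n)$ to $Q_0^{1/p}w_0$ through the continuity of $\mR$. Your explicit two-term splitting with the tail estimate just fills in a detail the paper passes over with a reference to pointwise convergence of $Q_{\eps_n}(\cdot+y_n)$.
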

\begin{proof}
For each $n$, the dual ground state $u_n$ can be represented as
$$
u_n(x)=k_n^{\frac2{p-2}}\mR(Q_{\eps_n}^\frac1pv_n)(k_nx),\quad x\in\R^N,
$$
where $\eps_n=k_n^{-1}$ and $v_n\in L^{p'}(\R^N)$ is a least-energy critical point of $J_{\eps_n}$, i.e., 
$J_{\eps_n}'(v_n)=0$ and $J_{\eps_n}(v_n)=c_{\eps_n}$.
By Lemma~\ref{lem:c>c_0} and Proposition~\ref{prop:limit_gs}, there is $x_0\in M$ 
and a sequence $(y_n)_n\subset\R^N$ such that, as $n\to\infty$, $x_n:=\eps_ny_n\to x_0$ and, 
going to a subsequence, $v_n(\cdot+y_n)\to w_0$ in $L^{p'}(\R^N)$ for some least-energy critical point $w_0$ of $J_0$.
Since for $x\in\R^N$,
$$
k_n^{-\frac2{p-2}}u_n\left(\frac{x}{k_n}+x_n\right)=\mR(Q_{\eps_n}^\frac1pv_n)(x+y_n)
=\mR\Bigl(Q_{\eps_n}^\frac1p(\cdot+y_n)v_n(\cdot+y_n)\Bigr)(x),
$$
we obtain, using the continuity of $\mR$ and the pointwise convergence 
$Q_{\eps_n}(x+y_n)\to Q(x_0)=Q_0$ as $n\to\infty$ for all $x\in\R^N$,
the strong convergence
$$
k_n^{-\frac2{p-2}}u_n\left(\frac{x}{k_n}+x_n\right) \to \mR\left(Q_0^\frac1pw_0\right)\quad\text{ in }L^p(\R^N).
$$
Setting $u_0=\mR(Q_0^\frac1pw_0)$, the properties $J_0(w_0)=c_0$ and $J_0'(w_0)=0$ imply 
that $u_0$ is a dual ground state solution of \eqref{eqn:sp_lim}
and this concludes the proof.
\end{proof}

\begin{remark}\label{remq:conc}
\begin{itemize} 
\item[(i)] The conclusion of the preceding theorem holds more generally for every sequence of dual bound states. 
Indeed, in view of Proposition~\ref{prop:limit_gs} it is enough to have $u_n(x)=\mR(Q_{\eps_n}^\frac1pv_n)(k_nx)$, 
where $v_n$ is a critical point of $J_{\eps_n}$, and to require $J_{\eps_n}(v_n)\to c_0$ as $n\to\infty$.
\item[(ii)] Elliptic estimates imply that the convergence towards $u_0$ holds in $W^{2,q}(\R^N)$ for all 
$\frac{2N}{N-1}<q<\infty$. In particular, the convergence holds in $L^\infty(\R^N)$ and 
since $u_0\in W^{2,p}(\R^N)$ we find that for every $\delta>0$ there is $R_\delta>0$ such that for large $n$, 
$$
k_n^{-\frac2{p-2}}|u_n(x)|< \delta\quad\text{ for all }|x-x_n|\geq\frac{R_\delta}{k_n},
$$
whereas $k_n^{-\frac2{p-2}}\|u_n\|_\infty\to \|u_0\|_\infty>0$ as $n\to\infty$. 
In addition, if $\tilde{x}_n$ denotes any global maximum point of $|u_n|$, 
then $\tilde{x}_n\to x_0$ as $n\to\infty$.
\end{itemize}
\end{remark}

\section{Multiplicity of dual bound states}
As before, we work under the assumptions (Q0) and (Q1) and let $M$ denote the set of
maximum points of $Q$. In addition, for $\delta>0$ we consider the closed neighborhood
$M_\delta:=\{x\in\R^N\, :\, \text{dist}(x,M)\leq\delta\}$ of $M$. 

The purpose of this section is to prove the multiplicity result stated in the Introduction, 
relating the number of solutions of \eqref{eqn:sp} and the topology of $M$. 
We recall it for the reader's convenience. 
\begin{theorem}\label{thm:multiple}
Suppose (Q0) and (Q1) holds. For every $\delta>0$, there exists $k(\delta)>0$ 
such that the problem \eqref{eqn:sp} has at least $\operatorname*{cat}_{M_\delta}(M)$
distinct dual bound states for all $k>k(\delta)$.
\end{theorem}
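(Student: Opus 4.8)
The plan is to follow the by-now-classical Benci–Cerami–Passaseo–Cingolani–Lazzo scheme, adapted to the dual functional $J_\eps$, and to build two continuous maps
$$
M \;\xrightarrow{\;\Psi_\eps\;}\; J_\eps^{c_0+\mu}\cap\cN_\eps \;\xrightarrow{\;\beta_\eps\;}\; M_\delta,
$$
whose composition $\beta_\eps\circ\Psi_\eps$ is homotopic to the inclusion $M\hookrightarrow M_\delta$. Here $J_\eps^{c_0+\mu}:=\{v\in\cN_\eps:J_\eps(v)\le c_0+\mu\}$ is a sublevel set with $\mu>0$ chosen small enough (and $\eps$ small enough) that $c_\eps\le c_0+\mu<c_\infty$, which is possible by Lemma~\ref{lem:c>c_0} and (Q1). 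For the first map, I would set $\Psi_\eps(y):=t_{\eps,y}\varphi_{\eps,y}$, the projection onto $\cN_\eps$ of the truncated, translated least-energy critical point of $J_0$ from \eqref{eqn:phi_eps_y}; Lemma~\ref{lem:phi} gives precisely that $J_\eps(\Psi_\eps(y))\to c_0$ uniformly in $y\in M$, so for small $\eps$ the image lands in $J_\eps^{c_0+\mu}\cap\cN_\eps$, and continuity in $y$ is routine. For the barycenter-type map $\beta_\eps$, I would use the transformation $u=\mR(Q_\eps^{1/p}v)$ and define $\beta_\eps(v):=\eps\cdot\frac{\int_{\R^N}\chi(x)\,|\mR(Q_\eps^{1/p}v)(x)|^p\,dx}{\int_{\R^N}|\mR(Q_\eps^{1/p}v)(x)|^p\,dx}$ for a suitable weight $\chi$ (e.g. $\chi(x)=x$ for $|x|\le\rho$ and $\chi(x)=\rho x/|x|$ otherwise, with $M\subset B_\rho$), so that $\beta_\eps$ is continuous on $\cN_\eps$ away from $0$.

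The analytic heart of the argument is the concentration estimate showing that $\beta_\eps$ maps $J_\eps^{c_0+\mu}\cap\cN_\eps$ into $M_\delta$ for $\eps$ small. I would argue by contradiction: if not, there are $\eps_n\to0$ and $v_n\in\cN_{\eps_n}$ with $J_{\eps_n}(v_n)\le c_0+\mu_n$, $\mu_n\to0$, but $\beta_{\eps_n}(v_n)\notin M_\delta$. Since $c_{\eps_n}\le J_{\eps_n}(v_n)\le c_0+\mu_n\to c_0$ forces $J_{\eps_n}(v_n)\to c_0$, Proposition~\ref{prop:limit_gs} applies and yields $x_0\in M$, a sequence $(y_n)$ with $\eps_ny_n\to x_0$, and $v_n(\cdot+y_n)\to w_0$ strongly in $L^{p'}$ with $w_0$ a least-energy critical point of $J_0$. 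By the continuity of $\mR$ and the pointwise convergence $Q_{\eps_n}(\cdot+y_n)\to Q_0$, this gives $\mR(Q_{\eps_n}^{1/p}v_n)(\cdot+y_n)\to\mR(Q_0^{1/p}w_0)\ne0$ in $L^p$, from which a change of variables in the definition of $\beta_{\eps_n}$ shows $\beta_{\eps_n}(v_n)\to x_0\in M\subset M_\delta$, contradicting $\beta_{\eps_n}(v_n)\notin M_\delta$. The only slightly delicate point here is controlling the tails when $|x|$ is large so that the weight $\chi$ behaves well; this is handled exactly as in Remark~\ref{remq:conc}(ii), using that $u_0=\mR(Q_0^{1/p}w_0)\in L^p$ has small mass outside large balls, uniformly along the sequence by the strong convergence.

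Once these two maps are in place, the homotopy $\beta_\eps\circ\Psi_\eps\simeq\mathrm{id}_M$ in $M_\delta$ is obtained by composing the convergences: for $y\in M$ one computes $\beta_\eps(\Psi_\eps(y))$ explicitly using $\mR(Q_\eps^{1/p}\varphi_{\eps,y})$, whose mass concentrates near $\eps^{-1}y$, so that $\beta_\eps(\Psi_\eps(y))\to y$ uniformly in $y\in M$ as $\eps\to0$; for $\eps$ small this stays in $M_\delta$ and the straight-line homotopy $s\mapsto (1-s)y+s\,\beta_\eps(\Psi_\eps(y))$ lies in $M_\delta$ (shrinking $\delta$ is harmless or one uses a retraction). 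The Lusternik–Schnirelman machinery on the complete $C^1$-manifold $\cN_\eps$ then finishes the proof: by Lemma~\ref{lem:PS_J_eps}, $J_\eps|_{\cN_\eps}$ satisfies (PS)$_d$ for every $d<c_\infty$, hence in particular on $J_\eps^{c_0+\mu}$; a standard deformation argument shows $J_\eps|_{\cN_\eps}$ has at least $\cat_{J_\eps^{c_0+\mu}}(J_\eps^{c_0+\mu})$ critical points in that sublevel set, and the existence of $\Psi_\eps,\beta_\eps$ with $\beta_\eps\circ\Psi_\eps\simeq\mathrm{id}_M$ gives $\cat_{J_\eps^{c_0+\mu}}(J_\eps^{c_0+\mu})\ge\cat_{M_\delta}(M)$. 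Each such critical point $v$ is nontrivial and produces a dual bound state $u$ of \eqref{eqn:sp} via \eqref{eqn:u_transf_dual}, and distinct critical points give distinct solutions; setting $k(\delta)=1/\eps(\delta)$ completes the argument. The main obstacle, as anticipated in the introduction to this section, is that the nonlocal quadratic term in $J_\eps$ carries no sign and couples distant regions — but this is exactly what Lemma~\ref{lem:interaction} and the resulting Palais–Smale condition below $c_\infty$ (Lemma~\ref{lem:PS_J_eps}) were designed to control, so the scheme goes through.
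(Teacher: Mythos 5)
Your overall scheme is exactly the one the paper uses: project the truncated translates $t_{\eps,y}\varphi_{\eps,y}$ onto $\cN_\eps$ (Lemma~\ref{lem:phi}), show via Proposition~\ref{prop:limit_gs} that a barycenter-type map sends a low sublevel set of $J_\eps|_{\cN_\eps}$ into $M_\delta$, deduce $\cat$ of the sublevel set $\geq\cat_{M_\delta}(M)$ from the homotopy to the inclusion, and invoke Ljusternik--Schnirelman theory on the complete $C^1$-manifold $\cN_\eps$ together with the Palais--Smale condition below $c_\infty$ (Lemma~\ref{lem:PS_J_eps}). However, two steps do not work as literally written. First, you fix a level $c_0+\mu$ with $\mu>0$ independent of $\eps$, but your contradiction argument for ``$\beta_\eps$ maps the sublevel set into $M_\delta$'' assumes $J_{\eps_n}(v_n)\leq c_0+\mu_n$ with $\mu_n\to 0$; Proposition~\ref{prop:limit_gs} only applies when the energies converge to $c_0$, so for a \emph{fixed} $\mu$ the claim is not proved (and cannot be extracted from the tools at hand). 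The repair is the one the paper makes: take an $\eps$-dependent threshold $\nu(\eps)\to 0$ with $\nu(\eps)>c_\eps-c_0$, which by Lemma~\ref{lem:phi} and Lemma~\ref{lem:c>c_0} still contains $\Psi_\eps(M)$ and keeps the level below $c_\infty$ for small $\eps$.

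Second, your barycenter formula is mis-scaled. You propose $\beta_\eps(v)=\eps\cdot\bigl(\int\chi(x)|u|^p\,dx\bigr)/\bigl(\int|u|^p\,dx\bigr)$ with $u=\mR(Q_\eps^{1/p}v)$ and $\chi$ truncated at the fixed radius $\rho$ with $M\subset B_\rho$. The mass of $u$ concentrates near $y_n$ with $\eps_n y_n\to x_0$, hence $|y_n|\sim|x_0|/\eps_n\to\infty$; on that region $\chi$ is already saturated at modulus $\rho$, so $\eps_n\chi\to 0$ and your map sends every concentrating sequence (in particular $\Psi_{\eps_n}(y)$) to $0$, destroying the homotopy to the inclusion. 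The truncation must be applied in the macroscopic variable, i.e.\ use the weight $\Xi(\eps x)$ as in the paper (then no external factor $\eps$), or equivalently truncate at radius $\rho/\eps$. With that correction, your variant of the barycenter built on $|u|^p=|\mR(Q_\eps^{1/p}v)|^p$ does work --- the needed convergence $\mR(Q_{\eps_n}^{1/p}v_n)(\cdot+y_n)\to\mR(Q_0^{1/p}w_0)\neq 0$ in $L^p$ is exactly what is used in the proof of Theorem~\ref{thm:conc_gs} --- but it is an unnecessary detour: the paper's choice of the density $|v|^{p'}$ avoids the nonlocal operator altogether. With these two repairs (vanishing threshold $\nu(\eps)$ and weight $\Xi(\eps x)$), your argument coincides with the paper's proof.
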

To prove this result, we shall construct two maps whose composition is homotopic to the inclusion
$M\embed M_\delta$. We start by introducing some notation.

For fixed $\delta>0$, we consider the family of rescaled barycenter type maps 
$\beta_\eps$: $L^{p'}(\R^N)\backslash\{0\}$ $\to$ $\R^N$, $\eps>0$, given as follows. 
Let $\rho>0$ be such that $M_\delta\subset B_\rho(0)$ and define $\Xi$: $\R^N$ $\to$ $\R^N$ by
$$
\Xi(x)=\left\{\begin{array}{ll} x & \text{ if }|x|<\rho \\ 
\frac{\rho x}{|x|} & \text{ if }|x|\geq \rho.\end{array}\right.
$$
For $v\in L^{p'}(\R^N)\backslash\{0\}$, we set
$$
\beta_\eps(v):=\frac1{\|v\|_{p'}^{p'}}\int_{\R^N}\Xi(\eps x)|v(x)|^{p'}\, dx.
$$
Moreover, as in the previous section, we consider for $\eps>0$ and $y\in M_\delta$ 
the function $\varphi_{\eps,y}\in L^{p'}(\R^N)$ defined by \eqref{eqn:phi_eps_y}, where
$\eta\in C^\infty_c(\R^N)$ is a cutoff function satisfying 
$0\leq\eta\leq 1$ in $\R^N$, $\eta\equiv 1$ in $B_1(0)$ and 
$\eta\equiv 0$ in $\R^N\backslash B_2(0)$, and where
$w\in L^{p'}(\R^N)$ is any fixed least-energy critical point of $J_0$.

We note that, due to the compactness of $M_\delta$, 
the following holds uniformly in $y\in M_\delta$.
\begin{equation}\label{eqn:beta_phi}
\begin{aligned}
\lim_{\eps\to 0^+}\beta_\eps(\varphi_{\eps,y})
&=\lim_{\eps\to 0^+}\frac{\int_{\R^N}\Xi(y+\eps z)\eta(\eps z)|w(z)|^{p'}\, dz}{\int_{\R^N}\eta(\eps z)|w(z)|^{p'}\, dz}
=\Xi(y)=y.
\end{aligned}
\end{equation}
Before proving the main result in this section,
we need the following preparatory lemma.
\begin{lemma}\label{lem:beta_sigma}
Let $\delta>0$ and let $\nu$: $(0,\infty)$ $\to$ $(0,\infty)$ satisfy
$\lim\limits_{\eps\to 0^+}\nu(\eps)=0$ and $\nu(\eps)>c_\eps-c_0$ for all $\eps>0$. 
Considering the sublevel set
$$
\Sigma_\eps:=\{v\in \cN_\eps\, :\, J_\eps(v)\leq c_0+\nu(\eps)\},
$$
there holds
$$
\lim_{\eps\to 0^+} \sup\limits_{v\in\Sigma_\eps}\inf\limits_{y\in M_{\frac{\delta}{2}}}|\beta_\eps(v)-y|=0.
$$
\end{lemma}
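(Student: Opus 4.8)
The plan is to argue by contradiction. Suppose the conclusion fails. Then there exist $\gamma>0$, a sequence $\eps_n\to 0^+$ and functions $v_n\in\Sigma_{\eps_n}$ such that
$$
\inf_{y\in M_{\delta/2}}\,|\beta_{\eps_n}(v_n)-y|\geq\gamma\qquad\text{for all }n.
$$
Since $v_n\in\cN_{\eps_n}$ we have $c_{\eps_n}\leq J_{\eps_n}(v_n)\leq c_0+\nu(\eps_n)$; combining $\nu(\eps_n)\to 0$ with $c_{\eps_n}\to c_0$ from Lemma~\ref{lem:c>c_0}, we get $J_{\eps_n}(v_n)\to c_0$. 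Hence Proposition~\ref{prop:limit_gs} applies: passing to a subsequence, there are $x_0\in M$, a least-energy critical point $w_0$ of $J_0$, and $(y_n)_n\subset\R^N$ with $\eps_n y_n\to x_0$ and $v_n(\cdot+y_n)\to w_0$ strongly in $L^{p'}(\R^N)$.

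It then suffices to prove $\beta_{\eps_n}(v_n)\to x_0$, which contradicts the above since $x_0\in M\subset M_{\delta/2}$. Changing variables $x=z+y_n$ in the definition of $\beta_\eps$ gives
$$
\beta_{\eps_n}(v_n)=\frac1{\|v_n(\cdot+y_n)\|_{p'}^{p'}}\int_{\R^N}\Xi(\eps_n z+\eps_n y_n)\,|v_n(z+y_n)|^{p'}\,dz .
$$
The denominator converges to $\|w_0\|_{p'}^{p'}>0$. For the numerator, I would use that $v_n(\cdot+y_n)\to w_0$ in $L^{p'}$ forces $|v_n(\cdot+y_n)|^{p'}\to|w_0|^{p'}$ in $L^1(\R^N)$ (for instance via Scheff\'e's lemma, from a.e.\ convergence along a subsequence together with convergence of the integrals), that $\Xi$ is continuous and bounded by $\rho$, and that $\Xi(x_0)=x_0$ because $x_0\in M\subset M_\delta\subset B_\rho$. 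Writing the integrand density as $x_0|w_0|^{p'}$ plus the two error terms $\big(\Xi(\eps_n\cdot+\eps_n y_n)-x_0\big)|v_n(\cdot+y_n)|^{p'}$ and $x_0\big(|v_n(\cdot+y_n)|^{p'}-|w_0|^{p'}\big)$, the second tends to $0$ in $L^1$; in the first, one replaces $|v_n(\cdot+y_n)|^{p'}$ by $|w_0|^{p'}$ at the cost of an $L^1$-error controlled by $\|\Xi(\eps_n\cdot+\eps_n y_n)-x_0\|_\infty\leq 2\rho$ times the vanishing $L^1$-difference, and then $\big(\Xi(\eps_n z+\eps_n y_n)-x_0\big)|w_0(z)|^{p'}\to 0$ pointwise (since $\eps_n z+\eps_n y_n\to x_0$) while being dominated by $2\rho|w_0|^{p'}\in L^1$, so dominated convergence applies. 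Therefore $\beta_{\eps_n}(v_n)\to x_0$.

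The real content is the compactness input supplied by Proposition~\ref{prop:limit_gs}; the present lemma is then little more than a continuity statement for the family $\beta_\eps$ along concentrating sequences. I expect the only point requiring a little care to be the numerator estimate: one cannot simply dominate $\Xi(\eps_n\cdot+\eps_n y_n)|v_n(\cdot+y_n)|^{p'}$ by a fixed $L^1$ function, because the profiles $v_n(\cdot+y_n)$ vary with $n$, so one has to separate the $L^1$-convergence of the densities from the dominated-convergence argument applied to the fixed limit density $|w_0|^{p'}$. This bookkeeping, rather than any genuine difficulty, is the main thing to handle.
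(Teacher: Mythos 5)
Your proof is correct and follows essentially the same route as the paper: the paper also argues via (near-maximizing) sequences $v_n\in\Sigma_{\eps_n}$, uses $c_0\le c_{\eps_n}\le J_{\eps_n}(v_n)\le c_0+\nu(\eps_n)$ to invoke Proposition~\ref{prop:limit_gs}, and then concludes from $\beta_{\eps_n}(v_n)\to \Xi(x_0)=x_0\in M\subset M_{\delta/2}$ by contradiction. The numerator/denominator bookkeeping you spell out is exactly what the paper compresses into the phrase ``similar to \eqref{eqn:beta_phi}''.
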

\begin{proof}
Notice that $\Sigma_\eps\neq\varnothing$, since $c_\eps<c_0+\nu(\eps)$ by assumption.
Let $(\eps_n)_n\subset(0,\infty)$ be any sequence such that $\eps_n\to 0$ as $n\to\infty$, 
and choose for each $n$ some $v_n\in\Sigma_{\eps_n}$ such that
\begin{equation}\label{eqn:inf_Sigma_Mdelta}
\inf\limits_{y\in M_{\frac{\delta}{2}}}|\beta_{\eps_n}(v_n)-y|
\geq \sup\limits_{v\in\Sigma_{\eps_n}}\inf\limits_{y\in M_{\frac{\delta}{2}}}|\beta_{\eps_n}(v)-y|-\frac1n.
\end{equation}
By Proposition~\ref{prop:limit_gs}, there is $x_0\in M$, a least-energy critical point $w_0$
of $J_0$ and a sequence $(y_n)_n\subset\R^N$ such that, up to a subsequence,
$\eps_ny_n\to x_0$ and $v_n(\cdot+y_n)\to w_0$ in $L^{p'}(\R^N)$, as $n\to\infty$.
Therefore, similar to \eqref{eqn:beta_phi} we obtain
$$
\beta_{\eps_n}(v_n)
=\frac{\int_{\R^N}\Xi(\eps_nx+\eps_ny_n)|v_n(x+y_n)|^{p'}\, dx}{\int_{\R^N}|v_n(x+y_n)|^{p'}\, dx}
\to \Xi(x_0)=x_0,\quad\text{as }n\to\infty.
$$
From \eqref{eqn:inf_Sigma_Mdelta} we deduce that (up to a subsequence)
$\sup\limits_{v\in\Sigma_{\eps_n}}\inf\limits_{y\in M_{\frac{\delta}{2}}}|\beta_{\eps_n}(v)-y|\to 0$
as $n\to\infty$. Since the sequence $(\eps_n)_n$ was arbitrarily chosen, the conclusion follows
by a contradiction argument.
\end{proof}
\begin{proof}[Proof of Theorem~\ref{thm:multiple}]
Let $\delta>0$. According to Lemma \ref{lem:phi}, Lemma \ref{lem:c>c_0} and the assumption (Q1),
we can find $\bar\eps>0$ and a function $\nu$: $(0,\infty)$ $\to$ $(0,\infty)$ 
such that $\nu(\eps)>c_\eps-c_0$ for all $\eps>0$, $\nu(\eps)\to 0$ as $\eps\to 0^+$
and $J_\eps(t_{\eps,y}\varphi_{\eps,y})<c_0+\nu(\eps)<c_\infty$, 
for all $y\in M$ and all $0<\eps<\bar\eps$. Moreover, let us assume without loss of generality
that, for every $0<\eps<\bar\eps$, the level $c_0+\nu(\eps)$ is not critical for $J_\eps$.
 
Consider for $0<\eps<\bar\eps$ the set $\Sigma_\eps$ given in Lemma \ref{lem:beta_sigma}.
Then $t_{\eps,y}\varphi_{\eps,y}\in\Sigma_\eps$ and there exists $\eps_1\leq \bar\eps$ 
such that for all $0<\eps<\eps_1$,
\begin{equation}\label{eqn:dist}
\sup\limits_{v\in\Sigma_\eps}\inf\limits_{y\in M_{\frac{\delta}{2}}}|\beta_\eps(v)-y|<\frac{\delta}{2}.
\end{equation}
In particular, $\beta_\eps(\Sigma_\eps)\subset M_\delta$ and by \eqref{eqn:beta_phi} 
the map $y\mapsto \beta_\eps(\varphi_{\eps,y})=\beta_\eps(t_{\eps,y}\varphi_{\eps,y})$ is homotopic
to the inclusion $M\embed M_\delta$ in $M_\delta$. 
Therefore, \cite[Lemma 2.2]{cingolani-lazzo00} gives 
$\text{cat}_{\Sigma_\eps}(\Sigma_\eps)\geq\text{cat}_{M_\delta}(M)$ for all $0<\eps<\eps_1$. 

Since $\cN_\eps$ is a complete $C^1$-manifold and since by Lemma \ref{lem:PS_J_eps}, 
$J_\eps$ satisfies the Palais-Smale condition on $\Sigma_\eps$, the Ljusternik-Schnirelman theory 
for $C^1$-manifolds from \cite{Rib-Tsa-Kra95} (see also \cite{cv_dg_mz93,szulkin88}) ensures 
the existence of at least $\text{cat}_{M_\delta}(M)$ distinct critical points of $J_\eps$ for all 
$0<\eps<\eps_1$.

The  transformation \eqref{eqn:u_transf_dual} gives for each critical point of $J_\eps$ 
a dual bound state of \eqref{eqn:sp} with $k=\eps^{-1}$ and,
since distinct critical points correspond to distinct bound states, the theorem 
follows by setting $k(\delta)=\eps_1^{-1}$.
\end{proof}

\begin{remark}
According to Remark~\ref{remq:conc}(i), the solutions given by Theorem~\ref{thm:multiple} concentrate as $k\to \infty$ 
in the sense of Theorem~\ref{thm:conc_gs}.
\end{remark}

\section*{Acknolwedgements}
\noindent This research is supported by the grant WE 2812/5-1 of the Deutsche Forschungsgemeinschaft (DFG).
The author would like to thank Tobias Weth for suggesting the question studied 
in this paper and for helpful discussions and valuable advice.

\bibliographystyle{abbrv}
\bibliography{literatur.bib}

\end{document}